\setlist[description]{%
  itemsep=0.05cm,               
  font={\normalfont\textsc}, 
 leftmargin=\parindent,
 labelindent=\parindent
}
\theoremstyle{definition}
\newtheorem{theorem}{Theorem}[section]
\newtheorem{lemma}[theorem]{Lemma}
\newtheorem{proposition}[theorem]{Proposition}
\newtheorem{corollary}[theorem]{Corollary}
\theoremstyle{definition}
\newtheorem{definition}[theorem]{Definition}
\newtheorem{example}[theorem]{Example}
\newtheorem{question}[theorem]{Question}
\newtheorem{questions}[theorem]{Questions}
\newtheoremstyle{theoremdd}
  {\topsep}
  {\topsep}
  {\normalfont}
  {0pt}
  {\scshape}
  {:}
  { }
  {\indent\thmname{#1}\thmnumber{ #2}\textnormal{\thmnote{ (#3)}}}
\theoremstyle{theoremdd}
\newtheorem{claimx}{Claim}
\definecolor{blue-url}{RGB}{0,0,100}
\definecolor{red-url}{RGB}{100,0,0}
\definecolor{green-url}{RGB}{0,100,0}
\definecolor{light-yellow}{RGB}{255,255,128}
\definecolor{light-blue}{RGB}{193,255,255}
\definecolor{light-red}{RGB}{239,83,80}
\renewcommand{\qedsymbol}{$\blacksquare$}
\renewcommand{\emptyset}{\varnothing}
\renewcommand{\setminus}{\smallsetminus}
\renewcommand{\,}{\kern 0.1em}
\providecommand\llb{\llbracket}
\providecommand\rrb{\rrbracket}
\providecommand\ord{{\rm ord}}
\newcommand{\evid}[1]{\textsf{#1}}
\newcommand{\fin}{\mathrm{fin}}
\newline\vspace{\abovedisplayskip}\hbox to \textwidth\bgroup\hss$\displaystyle}
\egroup\vspace{\belowdisplayskip}}
\DeclareFontFamily{OMX}{MnSymbolE}{}
\DeclareSymbolFont{MnLargeSymbols}{OMX}{MnSymbolE}{m}{n}
\DeclareFontShape{OMX}{MnSymbolE}{m}{n}{
	<-6>  MnSymbolE5
	<6-7>  MnSymbolE6
	<7-8>  MnSymbolE7
	<8-9>  MnSymbolE8
	<9-10> MnSymbolE9
	<10-12> MnSymbolE10
	<12->   MnSymbolE12
}{}
\DeclareFontShape{OMX}{MnSymbolE}{b}{n}{
	<-6>  MnSymbolE-Bold5
	<6-7>  MnSymbolE-Bold6
	<7-8>  MnSymbolE-Bold7
	<8-9>  MnSymbolE-Bold8
	<9-10> MnSymbolE-Bold9
	<10-12> MnSymbolE-Bold10
	<12->   MnSymbolE-Bold12
}{}
\let\llangle\@undefined
\let\rrangle\@undefined
\DeclareMathDelimiter{\llangle}{\mathopen}%
{MnLargeSymbols}{'164}{MnLargeSymbols}{'164}
\DeclareMathDelimiter{\rrangle}{\mathclose}%
{MnLargeSymbols}{'171}{MnLargeSymbols}{'171}
\begin{document}
\title{Torsion groups and the Bienvenu--Geroldinger conjecture}

%
\author{Salvatore Tringali}
\address{(S.~Tringali) School of Mathematical Sciences, Hebei Normal University | Shijiazhuang, Hebei province, 050024 China}
\email{salvo.tringali@gmail.com}
\urladdr{https://salvo-tringali.github.io/home/}

\author{Weihao Yan}
\address{(W.~Yan) School of Mathematical Sciences, Hebei Normal University | Shijiazhuang, Hebei province, 050024 China}
\email{weihao.yan.hebnu@outlook.com}

\subjclass[2020]{Primary 20E34, 20M10. Secondary 11P99}

\keywords{Isomorphism problems, power monoids, power semigroups, setwise operations, sumsets, torsion groups.}

\begin{abstract}
Equipped with the operation of setwise multiplication induced by a (multiplicatively written) monoid $H$ on its parts, the collection of all finite subsets of $H$ containing the identity element is itself a monoid, denoted by $\mathcal P_{{\rm fin}, 1}(H)$ and called the reduced finitary power monoid of $H$.

One is naturally led to ask whether, for all $H$ and $K$ in a given class of monoids, $\mathcal P_{\fin,1}(H)$ and $\mathcal P_{\fin,1}(K)$  are isomorphic if and only if $H$ and $K$ are. The problem originates from a conjecture of Bienvenu and Geroldinger that was recently settled by the authors. Here, we provide a positive answer to the problem in the case where $H$ and $K$ are cancellative monoids, one of which is torsion. In particular, the answer is in the affirmative when $H$ and $K$ are torsion groups. Whether the conclusion extends to arbitrary groups remains open.
\end{abstract}

\maketitle
\thispagestyle{empty}

\section{Introduction}
\label{sec:intro}

Let $H$ be a semigroup (see the end of this section for notation and terminology). Equipped with the (binary) op\-er\-a\-tion of setwise multiplication defined on the power set of $H$ by
\[
(X,Y) \mapsto \{ xy : x \in X,\, y \in Y \},
\]
the collection of all non-empty subsets of $H$ is itself a semigroup, denoted by 
$\mathcal P(H)$ and called as the \evid{large power semigroup} of $H$. Moreover, the family of all non-empty \textit{finite} subsets of $H$ is a sub\-semi\-group of $\mathcal P(H)$, denoted by $\mathcal P_\fin(H)$ and called the \evid{finitary power semigroup} of $H$. It is clear that $\mathcal P(H) = \mathcal P_\fin(H)$ if and only if $H$ is finite. We will generically refer to either of these structures as a \evid{power semigroup}. 

The systematic investigation of power semigroups began in the late 1960s and was continued quite intensively by semigroup theorists and computer scientists through the 1980s and 1990s. A catalyst of these early developments has been the role that power semigroups play in the study of formal languages and automata \cite{Alm02}.
A milestone in the history of the subject was marked by a 1967 paper of Tamura and Shafer \cite{Tam-Sha1967} that has eventually led to the following questions.

\begin{questions}
\label{ques:tamura-shafer-iso-problem}
Let $\mathscr C$ be a class of semigroups. Is it true that, for all $H, K \in \mathscr C$,
\begin{enumerate}[label=\textup{(\alph{*})}]
\item\label{ques:tamura-shafer-iso-problem(1)} $\mathcal P(H)$ is isomorphic to $\mathcal P(K)$ if and only if $H$ is isomorphic to $K$?
\item\label{ques:tamura-shafer-iso-problem(2)} $\mathcal P_\fin(H)$ is isomorphic to $\mathcal P_\fin(K)$ if and only if $H$ is isomorphic to $K$?
\end{enumerate}
\end{questions}
The interesting aspect of these questions lies in the ``only if\,'' direction. In fact, if $f$ is an isomorphism from a semigroup $H$ to a semigroup $K$, then its \evid{augmentation} 
\begin{equation}\label{equ:augmentation}
f^\ast \colon \mathcal P(H) \to \mathcal P(K) \colon X \mapsto f[X] := \{f(x) \colon x \in X\}
\end{equation}
is a \evid{global isomorphism} from $H$ to $K$, that is, an isomorphism from $\mathcal P(H)$ to $\mathcal P(K)$.
Moreover, $\mathcal P_\fin(H)$ is isomorphic to $\mathcal P_\fin(K)$ via the restriction of $f^\ast$ to the non-empty finite subsets of $H$, since $f^\ast(X) \in \mathcal P_\fin(K)$ for every $X \in \mathcal P_\fin(H)$ (see \cite[Remark 4]{Tri-2024(a)} and \cite[Sect.~1]{GarSan-Tri-24(a)} for additional details).

The answer to Question \ref{ques:tamura-shafer-iso-problem}\ref{ques:tamura-shafer-iso-problem(1)} is negative for the class of all semigroups \cite{Mog1973}; is positive for groups \cite{Shaf-1967}, semilattices \cite[p.~218]{Koba-1984}, Clifford semigroups \cite[Theorem 4.7]{Gan-Zhao-2014}, cancellative commutative semigroups \cite[Corollary 1]{Tri-2024(a)}, etc.;
and is open for finite semigroups \cite[p.~5]{Hami-Nord-2009}, despite some authors having claimed the opposite based on results announced in \cite{Tamu-1987} but never proved.

As for Question \ref{ques:tamura-shafer-iso-problem}\ref{ques:tamura-shafer-iso-problem(2)}, very little is known outside the case where $\mathscr C$ is a class of \textit{finite} semigroups contained in any of the classes that are already covered by the positive results reviewed in the previous paragraph. More precisely, Bienvenu and Geroldinger have established in \cite[Theorem 3.2(3)]{Bie-Ger-25} that the problem has an affirmative answer for numerical monoids, and the conclusion has been subsequently generalized to cancellative commutative semigroups in \cite[Corollary 1]{Tri-2024(a)}. Here, a \evid{numerical monoid} is a sub\-monoid of the additive monoid of non-negative integers with finite complement in $\mathbb N$.

Now suppose that $H$ is a monoid with identity $1_H$. Both $\mathcal P(H)$ and $\mathcal P_\fin(H)$ are then monoids too, their identity being the singleton $\{1_H\}$. Moreover, the family of all finite subsets of $H$ containing $1_H$ is a submonoid of $\mathcal P_\fin(H)$, denoted by $\mathcal P_{\fin,1}(H)$ and called the \evid{reduced finitary power monoid} of $H$. Introduced by Fan and Tringali in \cite{Fa-Tr18} and further investigated in \cite{An-Tr18, Bie-Ger-25, GarSan-Tri-24(a)} and \cite[Sect.~4.2]{Tr20(c)}, $\mathcal P_{\fin,1}(H)$ is one of the most basic objects in an intricate web of structures, generically dubbed as \textsf{power monoids}. 

Power monoids provide an intrinsic algebraic framework for a variety of problems in additive number theory and related fields, including S\'ark\"ozy's conjecture on the ``additive irreducibility'' of the set of [non-zero] quadratic residues of a finite field of prime order \cite[Conjecture 1.6]{Sark2012} and Ostmann's conjecture \cite[p.~13]{Ostm-1968} on the ``asymptotic additive irreducibility'' of the set of (positive rational) primes.
In addition, the arithmetic of power monoids --- particularly with regard to the possibility (or impossibility) of writing certain sets as a finite product of other sets that are, in a suitable sense, irreducible --- has been central to the ongoing development of an ``extended theory of factorization'' \cite{Tr20(c), Tr21(b), Co-Tr-26(a), Co-Tr-21(a), Co-Tr-22(b)} that reaches far beyond the scope of the classical theory \cite{Ger-Hal-06, Ge-Zh-20a}. For a survey of these aspects of the theory, see \cite{Tri-2026}.

With the above ideas in mind, one is naturally led to ask the following question, as a companion to Questions \ref{ques:tamura-shafer-iso-problem}.

\begin{question}\label{ques:BG-like-for-monoids}
Let $\mathscr C$ be a class of monoids. Is it true that, for all $H, K \in \mathscr C$, the monoid $\mathcal P_{\fin,1}(H)$ is isomorphic to $\mathcal P_{\fin,1}(K)$ if and only if $H$ is isomorphic to $K$?
\end{question}

Again, the core of the question lies in proving the ``only if\,'' direction. Indeed, let $f$ be a semigroup iso\-mor\-phism from a monoid $H$ to a monoid $K$, and let $f^\ast$ be the augmentation of $f$, as defined by Eq.~\eqref{equ:augmentation}. Then $f$ maps the identity of $H$ to the identity of $K$ (see, e.g., the last lines of \cite[Sect.~2]{Tri-2024(a)}), and hence $f(u) \in K^\times$ for all $u \in H^\times$. Since $f[X]$ is a finite subset of $K$ for every $X \in \mathcal P_\fin(H)$, it follows that $f^\ast$ restricts to an isomorphism $\mathcal P_{\fin,1}(H) \to \mathcal P_{\fin,1}(K)$ (cf.~\cite[Remark 1.1]{Tri-Yan-23(a)}).

In~\cite[Theorem~2.5]{Tri-Yan-23(a)}, Tringali and Yan answered 
Question~\ref{ques:BG-like-for-monoids} in the affirmative for the class of 
(\evid{rational}) \evid{Puiseux monoids}, that is, submonoids of the non-negative rational numbers 
under addition. This confirmed a conjecture of Bienvenu and 
Geroldinger~\cite[Conjecture~4.7]{Bie-Ger-25} concerning numerical monoids.
In contrast, Rago has recently shown in~\cite[Corollary~3]{Rago2025} that the answer to the same 
question is negative for the class of cancellative commutative monoids --- and, more generally, 
for any class of monoids that contains non-isomorphic cancellative valuation monoids \cite[Theorem and Definition 2.6.4.2]{Halter-Koch2025} with trivial unit groups and isomorphic groups of fractions.

In the present work, we contribute to this line of research by proving that Question \ref{ques:BG-like-for-monoids} admits an affirmative answer for torsion groups (Corollary \ref{cor:4.9}). One major step requires showing --- and this is a fact of independent 
interest --- that, for every isomorphism $f$ from the reduced finitary power monoid of a monoid $H$ to the reduced finitary power 
monoid of a monoid $K$, there exists a bijection 
$g \colon H \to K$, called the pullback of $f$, 
such that $f(\{1_H, x\}) = \{1_K, g(x)\}$ for all $x \in H$ (Corollary 
\ref{cor:iso-induces-a-bijection} and Definition \ref{def:pullback}). Our main result states that if $H$ and $K$ are cancellative monoids and one of them is torsion (and hence a group), then the pullback of an isomorphism from $\mathcal P_{\fin,1}(H)$ to $\mathcal P_{\fin,1}(K)$ is an isomorphism from $H$ to $K$ (Theorem \ref{thm:BG-for-torsion-groups}). Question \ref{ques:BG-like-for-monoids} remains open in the case of arbitrary groups.

\subsection*{Generalities.} If not explicitly specified, we write all semigroups (and monoids) multiplicatively, and all morphisms are \textit{semigroup} homomorphisms (even if their domain and codomain are both monoids). 

We denote by $\mathbb N$ the (set of) non-negative integers, by $\mathbb N^+$ the positive integers, by $\mathbb Z$ the integers, by $|X|$ the cardinality of a set $X$, and by $f^{-1}$ the (functional) inverse of a bijection $f$.
Unless otherwise stated, we reserve the letters $m$ and $n$ for positive integers, and the letters $i$, $j$, and $k$ for non-negative integers.
If $a, b \in \mathbb N$, we let $\llb a, b \rrb := \{x \in \allowbreak \mathbb N \colon \allowbreak a \le x \le b\}$ be the (\evid{discrete}) \evid{interval} from $a$ to $b$. 

An element $a$ in a semigroup $H$ is said to be \evid{torsion} if the set $\{a^n : n \in \mathbb{N}^+\}$ is finite, and 
\evid{cancellative} if left multiplication and right multiplication by $a$ are both injective functions on $H$. The semigroup itself is torsion (respectively, cancellative) if each 
of its elements is.

Suppose now that $H$ is a monoid. The \evid{order} $\ord_H(a)$ of an element $a \in H$ is the \textit{size} of the \evid{cyclic submonoid} 
$\langle a \rangle_H := \{a^n : n \in \mathbb{N}\}$ generated by $a${\kern0.1em}: if $\langle a \rangle_H$ is infinite, then  $\ord_H(a) := \infty$; otherwise, $\ord_H(a)$ is the number of elements in $\langle a \rangle_H$.  It is a basic fact --- and we will often use it later without further comment --- that every cancellative torsion element $a \in H$ is a unit, and its order is the smallest $n \ge 1$ 
such that $a^n = 1_H$. Here, a \evid{unit} is an element $u \in H$ for which there exists a (necessarily unique) element $v \in H$, denoted by $u^{-1}$ and called the \evid{inverse} of $u$, such that $uv = vu = 1_H$.

Additional notation and ter\-mi\-nol\-o\-gy, if not explained when first used, are standard or should be clear from the context. In particular, we refer to Howie's monograph \cite{Ho95} for the basics of semigroup theory.
\section{Preliminaries on the order of an element in a monoid}
\label{sect:02}

Below, we collect some fundamental properties of the order of an element in a monoid $H$, with focus on the case where $H$ is cancellative. These properties are closely related to specific ``structural features'' of the finitary power monoid $\mathcal{P}_{\mathrm{fin},1}(H)$ of $H$ and will be crucial in the proofs of Sect.~\ref{sect:04}.

\begin{lemma}\label{lem_2.1}
Let $H$ be a monoid. A torsion element $z \in H$ has order $n$ if and only if $n$ is the smallest integer $k \ge 1$ such that $\{1_H, z\}^k = \{1_H, z\}^{k-1}$.
\end{lemma}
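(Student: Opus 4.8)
The plan is to reduce the statement to an explicit description of the powers of the doubleton $X := \{1_M, z\}$ inside $\mathcal P_{\fin,1}(M)$. Since $1_M$ is the identity of $M$ and therefore commutes with $z$, a product of $k$ factors, each equal to $1_M$ or to $z$, collapses to $z^j$, where $j \in \llb 0, k \rrb$ counts the factors equal to $z$. A one-line induction on $k$ then yields
\[
X^k = \{z^j : j \in \llb 0, k \rrb\} = \{1_M, z, z^2, \ldots, z^k\}, \qquad k \ge 0,
\]
where the base case $k = 0$ reads $X^0 = \{1_M\}$ (the identity of $\mathcal P_{\fin,1}(M)$).

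Granting this identity, the equation $X^k = X^{k-1}$ is equivalent to $\{z^j : j \in \llb 0, k\rrb\} = \{z^j : j \in \llb 0, k-1\rrb\}$, which in turn holds if and only if $z^k \in \{z^0, \ldots, z^{k-1}\}$, that is, $z^k = z^i$ for some $i \in \llb 0, k-1\rrb$. Writing $k_0$ for the smallest $k \ge 1$ enjoying this last property, the lemma amounts to the numerical identity $k_0 = \ord_M(z)$: indeed, $z$ has order $n$ precisely when $n = \ord_M(z)$, and this is exactly what the right-hand side of the biconditional asserts once $k_0 = \ord_M(z)$ is known.

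To prove $k_0 = \ord_M(z)$, I would first observe that $k_0$ exists because $z$ is torsion, so the cyclic submonoid $\langle z \rangle_M$ is finite and the pigeonhole principle forces $z^a = z^b$ for some $0 \le a < b$; the least such $b$ is then a valid exponent. By the minimality of $k_0$, the powers $z^0, z^1, \ldots, z^{k_0 - 1}$ are pairwise distinct. It remains to check that no further elements occur, i.e. $\langle z \rangle_M = \{z^0, \ldots, z^{k_0 - 1}\}$: writing $z^{k_0} = z^i$ with $i \in \llb 0, k_0 - 1\rrb$ and arguing by induction on the exponent, every $z^j$ with $j \ge k_0$ equals $z^{j - (k_0 - i)}$ and hence reduces to a power with strictly smaller nonnegative exponent, ultimately one below $k_0$. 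Consequently $|\langle z\rangle_M| = k_0$, that is $\ord_M(z) = k_0$, which closes the argument. The only mildly delicate point is this last reduction step, which guarantees that the first coincidence among the powers of $z$ already exhausts the cyclic submonoid; everything else is a routine computation.
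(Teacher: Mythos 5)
Your proof is correct and follows essentially the same route as the paper's: both compute $\{1_M,z\}^k = \{1_M, z, \ldots, z^k\}$ explicitly and reduce the stabilization condition $\{1_M,z\}^k = \{1_M,z\}^{k-1}$ to the first repetition among the powers of $z$, whose index is then identified with $\ord_M(z)$. The only difference is that you re-derive the elementary structure of the finite cyclic submonoid $\langle z\rangle_M$ from scratch, where the paper simply cites Howie's Theorem~1.2.2(2).
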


\begin{proof}
Fix $z \in H$. By \cite[Theorem 1.2.2(2)]{Ho95}, $z$ has order $n$ if and only if $1_H, z, \ldots, z^{n-1}$ are pairwise distinct and there exists $m \in \llb 0, n-1 \rrb$ such that $z^n = z^m$. This implies, on the one hand, that
\begin{equation}\label{lem_2.1:eq_01}
\{1_H, z\}^n = \{1_H, z, \ldots, z^n\} = \{1_H, z, \ldots, z^{n-1}\} = \{1_H, z\}^{n-1};
\end{equation}
and, on the other hand, that
\begin{equation}\label{lem_2.1:eq_02}
z^k \notin \{1_H, z, \ldots, z^{k-1}\} = \{1_H, z\}^{k-1}, \qquad \text{for all } 
k \in \llb 1, n-1 \rrb.
\end{equation}
In particular, Eq.~\eqref{lem_2.1:eq_02} shows that $\{1_H, z\}^k \ne \{1_H, z\}^{k-1}$ for every $k \in \llb 1, n-1 \rrb$, which, together with Eq.~\eqref{lem_2.1:eq_01}, is enough to complete the proof.
\end{proof}

\begin{lemma}\label{lem:smallest-integer}
Let $H$ be a monoid, and fix $z \in H$ and $\ell \in \mathbb N^+$. If $r \ge \ell-1$ is an integer, then 
\begin{equation*}
\{1_H, z^\ell\} \{1_H, z\}^r = \{1_H, z\}^{\ell+r}.
\end{equation*}
If, in addition, $z$ is cancellative and $\ell$ is no larger than the order of $z$ in $H$, then 
\begin{equation*}
\{1_H, z^\ell\} \{1_H, z\}^r \ne \{1_H, z\}^s, \qquad \text{for all } r, s \in \mathbb N \text{ with } r < \ell-1 \le s.
\end{equation*}
\end{lemma}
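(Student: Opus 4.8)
The plan is to base everything on the elementary identity $\{1_M, z\}^r = \{z^i : 0 \le i \le r\} = \{1_M, z, \ldots, z^r\}$, valid for every $r \in \mathbb N$, which one proves by a one-line induction on $r$ (and which is already implicit in the computation carried out in Lemma~\ref{lem_2.1}). Expanding the setwise product by means of this formula turns both sides of each claimed relation into unions of ``powers of $z$ indexed by integer intervals,'' so that the whole statement reduces to bookkeeping with intervals of exponents; the only delicate point will be to control when distinct exponents produce the same power of $z$.

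For the first identity, I would compute
\[
\{1_M, z^\ell\}\{1_M, z\}^r = \{z^i : 0 \le i \le r\} \cup \{z^{\ell + i} : 0 \le i \le r\},
\]
whose exponent set is $\llb 0, r \rrb \cup \llb \ell, \ell + r \rrb$. The hypothesis $r \ge \ell-1$ is exactly the condition $\ell \le r+1$ that makes these two integer intervals overlap or abut, so that their union is the single interval $\llb 0, \ell + r \rrb$; hence the product equals $\{z^i : 0 \le i \le \ell + r\} = \{1_M, z\}^{\ell + r}$. Note that this part requires neither cancellativity nor the order hypothesis, since it is an equality of exponent sets and therefore survives any coincidences among the $z^i$.

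For the second identity the roles reverse: now $r < \ell-1$ forces $\ell \ge r+2$, so the exponent set $\llb 0, r \rrb \cup \llb \ell, \ell + r \rrb$ of the left-hand product has a genuine gap containing $\ell-1$. I would therefore take $z^{\ell-1}$ as the witness distinguishing the two sides: it lies in the right-hand side $\{1_M, z\}^s = \{z^i : 0 \le i \le s\}$ because $\ell - 1 \le s$, and I claim it lies in neither exponent block of the left-hand side. The main obstacle --- and the only place where the assumptions that $z$ is cancellative and $\ell \le \ord_M(z)$ enter --- is to rule out an \emph{accidental} coincidence $z^{\ell-1} = z^j$ with $j \in \llb 0, r \rrb \cup \llb \ell, \ell + r \rrb$. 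If $z$ has infinite order this is immediate, as all its powers are then distinct. If $z$ has finite order $n = \ord_M(z)$, then $z$ is a cancellative torsion element, hence a unit of order $n$, so $z^0, \ldots, z^{n-1}$ are pairwise distinct and $z^a = z^b$ if and only if $a \equiv b \pmod n$. For $j$ in the first block, the inequalities $0 \le j \le r \le \ell-2$ and $\ell - 1 \le n-1$ place both $j$ and $\ell-1$ in $\llb 0, n-1 \rrb$ as distinct exponents, so $z^j \ne z^{\ell-1}$; for $j$ in the second block, the bounds $\ell \le j \le \ell + r \le 2\ell - 2$ yield $0 < j - (\ell-1) \le \ell - 1 \le n - 1 < n$, so again $j \not\equiv \ell - 1 \pmod n$. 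In every case $z^{\ell-1}$ is absent from the left-hand side, which proves that the two sets differ.
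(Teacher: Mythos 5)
Your proof is correct and takes essentially the same route as the paper's: the first identity via the same exponent-interval computation (with $r \ge \ell-1$ forcing $\llb 0,r\rrb$ and $\llb \ell,\ell+r\rrb$ to overlap or abut), and the second by using $z^{\ell-1}$ as the distinguishing witness, with cancellativity and the bound $\ell \le \ord_M(z)$ ruling out accidental coincidences among the powers of $z$. The only difference is presentational: the paper argues by contradiction and cancels to get $z^{i+1}=1_M$, whereas you verify the non-membership directly via the congruence criterion, which amounts to the same computation.
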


\begin{proof}
The first part is straightforward: for any integer $r \ge \ell-1$, we have 
\[
\{1_H, z^\ell\} \{1_H, z\}^r = \{z^i: 0 \le i \le r\} \cup \{z^{\ell+i}: 0 \le i \le r\} = \{1_H, z, \ldots, z^{\ell+r}\} = \{1_H, z\}^{\ell+r},
\]
As for the second part, suppose that $z$ is a cancellative element and $\ell$ is no larger than $n := \ord_H(z)$. Assume for a contradiction that there exist
$r, s \in \mathbb N$ with
$r < \ell-1 \le s$ such that
\begin{equation}\label{lem:smallest-integer:eq(02)}
\{1_H, z\}^s = \{1_H, z^\ell\}\{1_H, z\}^r = \{1_H, z\}^r \cup z^\ell \{1_H, z\}^r.
\end{equation}
Since $1 \le \ell \le n$, it is then clear that 
$$
z^{\ell-1} \notin \{1_H, \allowbreak z\}^r
\quad\text{and}\quad
z^{\ell-1} \in \{1_H, \allowbreak z\}^s.
$$
It thus follows 
from Eq.~\eqref{lem:smallest-integer:eq(02)} that $z^{\ell-1} = z^{\ell+i}$ for some 
$i \in \llb 0, r \rrb$. By the cancellativity of $z$, this implies $z^{i+1} = 1_H$. In 
particular, $z$ is a unit and generates a finite cyclic subgroup of the unit group of $H$ of order $n$. Therefore, we conclude from the elementary properties of cyclic groups
\cite[Theorem I.3.4(iv)]{Hung2003} that $n \mid i + \allowbreak 1$, which is impossible since 
$1 \le i+1 \le \allowbreak r + \allowbreak 1 \le \allowbreak \ell-1 < n$.
\end{proof}

\begin{example} 
Let $H$ be a cyclic monoid of order $4$ and index $2$, meaning that $H$ is generated by an element $z$ of order $4$ such that $z^2 = z^4$ (see \cite[Sect.~1.2]{Ho95} for terminology). We have 
$$
\{1_H, z^3\} \{1_H, z\} = \{1_H, z\}^{3+1},
$$
which shows that Lemma \ref{lem:smallest-integer} fails to be true with $\ell = \allowbreak 3$: otherwise, we would have $\{1_H, z^3\} \{1_H, z\}^r \ne \{1_H, z\}^{3+r}$ for each $r \in \{0, 1\}$. In other words, Lemma \ref{lem:smallest-integer} need not be true if $z$ is not cancellative.
\end{example}

\begin{lemma}\label{lem-equation-x^m=y^n}
Let $H$ be a monoid. If $x, y \in H$ and $x^r = y^s$ for some $r, s \in \mathbb N^+$, then
\begin{equation}
\label{equ-x^m=y^n-1} 
\{1_H, x\}^{r-1} \{1_H, xy\} \{1_H, y\}^s = \{1_H, x\}^r \{1_H, y\}^{s+1}
\end{equation}
and
\begin{equation}\label{equ-x^m=y^n-2} 
\{1_H, x\}^r \{1_H, xy\} 
\{1_H, y\}^{s-1} = \{1_H, x\}^{r+1} \{1_H, y\}^s.
\end{equation}
\end{lemma}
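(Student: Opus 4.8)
The plan is to compute both sides of each identity explicitly as sets of powers of $x$ and $y$, and then to compare them element by element. Throughout I would use the elementary identity $\{1_M, z\}^k = \{1_M, z, \dots, z^k\}$, valid for every $z \in M$ and $k \in \mathbb N$, together with the shorthands $A := \{1_M, x\}$, $B := \{1_M, y\}$, and $C := \{1_M, xy\}$.

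For Eq.~\eqref{equ-x^m=y^n-1}, I would first expand the left-hand side $A^{r-1} C B^s$. Using only associativity (no commutativity is assumed), one has $x^i(xy)y^j = x^{i+1}y^{j+1}$, so that
\[
A^{r-1} C B^s = \{x^i y^j : 0 \le i \le r-1,\ 0 \le j \le s\} \cup \{x^i y^j : 1 \le i \le r,\ 1 \le j \le s+1\},
\]
whereas the right-hand side is $A^r B^{s+1} = \{x^i y^j : 0 \le i \le r,\ 0 \le j \le s+1\}$. The inclusion $\subseteq$ is immediate, since every index pair occurring on the left lies in the box $\llb 0, r\rrb \times \llb 0, s+1\rrb$ that indexes the right-hand side.

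For the reverse inclusion I would note that the only index pairs of $\llb 0, r\rrb \times \llb 0, s+1\rrb$ not already occurring on the left are the two corners $(r,0)$ and $(0,s+1)$, i.e.\ the elements $x^r$ and $y^{s+1}$. This is exactly where the hypothesis $x^r = y^s$ enters: on the one hand $x^r = y^s$ is the element of index $(0,s)$, which does occur on the left; on the other hand $y^{s+1} = y^s y = x^r y$ is the element of index $(r,1)$, which also occurs on the left. Hence the two sides coincide. The companion identity \eqref{equ-x^m=y^n-2} would be handled identically: comparing $A^r C B^{s-1}$ with $A^{r+1} B^s = \{x^i y^j : 0 \le i \le r+1,\ 0 \le j \le s\}$, the only missing corners are $(r+1,0)$ and $(0,s)$, i.e.\ $x^{r+1}$ and $y^s$; and again $y^s = x^r$ has index $(r,0)$, while $x^{r+1} = x\,x^r = x y^s$ has index $(1,s)$, both present in the expansion of the left-hand side.

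The single real subtlety — and the step I expect to be the main obstacle — is that, because $M$ need not be commutative and distinct index pairs may represent the same monoid element, one cannot reason with formal index pairs alone but must track actual elements of $M$. A naive count of indices would suggest that the two sides differ by two corner points; it is precisely the relation $x^r = y^s$ that re-expresses those corner elements with admissible indices, collapsing the apparent discrepancy. Once this is recognized, all remaining computations are routine and rely on associativity only.
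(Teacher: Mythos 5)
Your proof is correct and follows essentially the same route as the paper's: both expand each side as a set of elements $x^i y^j$ over explicit index ranges, observe that the two sides can differ only in the ``corner'' elements $x^r$ and $y^{s+1}$ (respectively $x^{r+1}$ and $y^s$), and then use the hypothesis $x^r = y^s$ to exhibit those elements with admissible indices on the left-hand side. The only cosmetic difference is that the paper dispatches the second identity by symmetry, whereas you write it out explicitly.
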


\begin{proof}
We focus on the first identity, as the second follows by symmetry.

Denote by $L$ and $R$ the left-hand side and the right-hand side of Eq.~\eqref{equ-x^m=y^n-1}, respectively. Given $z \in H$, it is clear that $z \in L$ if and only if either $z = x^i y^j$ or $z = x^{i+1}y^{j+1}$ for some $i \in \llbracket 0, r-1 \rrbracket$ and $j \in \llbracket 0, s \rrbracket$, while $z \in R$ if and only if $z = x^i y^j$ for some $i \in \llbracket 0, r \rrbracket$ and $j \in \llbracket 0, s+1 \rrbracket$. Consequently, we have
$$
R \setminus \{x^r, y^{s+1}\} \subseteq L \subseteq R.
$$
It remains to check that $x^r, y^{s+1} \in L$, and this is immediate. Indeed, we have $x^r = \allowbreak y^s$ (by hypothesis), and hence $y^{s+1} = x^r y$. Since $y^s, x^r y \in L$ (as noted above), we are done.
\end{proof}

Lemma \ref{lem-equation-x^m=y^n} applies, in particular, to torsion 
groups, because in a torsion group every element has a positive power equal to 
the identity. We conclude the section with the following:

\begin{proposition}\label{prop:x^r=y^s}
Let $H$ be a monoid, and let $x, y \in H$ be cancellative torsion elements. Then $x$ and $y$ are units, and there exist $r, u \in \llb 1, \ord_H(x) \rrb$ and $s, v \in \llb 1, \ord_H(y) \rrb$ such that
\begin{equation*}
\text{(i) } x^r = y^s \text{ and } x^u = y^v;
\qquad
\text{(ii) if } x^c = y^d \text{ for some } c, d \in \mathbb Z, \text{ then }
r \mid c \text{ and } v \mid d. 
\end{equation*}
\end{proposition}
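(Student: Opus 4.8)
The plan is to reduce everything to the classical fact that every subgroup of $\mathbb Z$ is cyclic. First I would invoke the basic fact recalled in the introduction: since $x$ and $y$ are cancellative torsion elements, they are units, and their cyclic submonoids $\langle x \rangle_M$ and $\langle y \rangle_M$ are finite cyclic \emph{groups}, of orders $p := \ord_M(x)$ and $q := \ord_M(y)$, respectively. In particular $x^p = y^q = 1_M$, and $\langle x \rangle_M$ and $\langle y \rangle_M$ are subgroups of the unit group $M^\times$.

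Next, I would consider the set $A := \{c \in \mathbb Z : x^c \in \langle y \rangle_M\}$. Because $\langle y \rangle_M$ is a group, it is closed under products and inverses; hence if $c_1, c_2 \in A$, then $x^{c_1 - c_2} = x^{c_1}(x^{c_2})^{-1} \in \langle y \rangle_M$, so $A$ is a subgroup of $\mathbb Z$. (Crucially, this manipulation takes place entirely inside $\langle y \rangle_M$ and does not require $x$ and $y$ to commute.) Moreover $p \in A$, since $x^p = 1_M \in \langle y \rangle_M$, so $A$ is non-trivial and therefore $A = r\,\mathbb Z$, where $r$ is its least positive element; from $p \in A$ we get $r \mid p$ and hence $r \in \llb 1, p \rrb$. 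As $x^r \in \langle y \rangle_M$, I can write $x^r = y^s$, where $s$ is determined modulo $q$; I would pick the representative in $\llb 1, q \rrb$, taking $s = q$ precisely when $x^r = 1_M$. Symmetrically, the set $B := \{d \in \mathbb Z : y^d \in \langle x \rangle_M\}$ is a subgroup of $\mathbb Z$ containing $q$, so $B = v\,\mathbb Z$ with $v \mid q$ and $v \in \llb 1, q \rrb$; writing $y^v = x^u$ and choosing $u \in \llb 1, p \rrb$ (with $u = p$ exactly when $y^v = 1_M$) yields the second identity of part~(i).

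Part~(ii) is then immediate from the construction. If $x^c = y^d$ for some $c, d \in \mathbb Z$, then $x^c = y^d \in \langle y \rangle_M$, so $c \in A = r\,\mathbb Z$ and thus $r \mid c$; likewise $y^d = x^c \in \langle x \rangle_M$, so $d \in B = v\,\mathbb Z$ and thus $v \mid d$.

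I do not expect a deep obstacle here: the only point demanding genuine care is the bookkeeping that keeps $r, s, u, v$ inside the ranges $\llb 1, p \rrb$ and $\llb 1, q \rrb$ rather than the more natural $\llb 0, p-1 \rrb$ and $\llb 0, q-1 \rrb$. This is exactly why the statement allows the endpoints to equal the orders, so as to accommodate the case where the common power $x^r$ (resp.\ $y^v$) equals $1_M$. The potential noncommutativity of $x$ and $y$ is harmless, since every subgroup computation is carried out within the individual cyclic groups $\langle x \rangle_M$ and $\langle y \rangle_M$.
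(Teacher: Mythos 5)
Your proof is correct. At its core it rests on the same mechanism as the paper's argument --- minimality of an exponent plus the division algorithm --- but your packaging is genuinely cleaner: you observe that $A = \{c \in \mathbb Z : x^c \in \langle y \rangle_M\}$ is a subgroup of $\mathbb Z$ (with closure under subtraction computed entirely inside the finite group $\langle y \rangle_M$, so, as you rightly stress, no commutativity between $x$ and $y$ is needed), and then part~(i) and the full integer-exponent divisibility of part~(ii) both fall out of $A = r\,\mathbb Z$ and $B = v\,\mathbb Z$ in one stroke. The paper instead chooses $r$ and $v$ by well-ordering as minimal \emph{positive} exponents satisfying a relation $x^r = y^s$ with $s \in \mathbb N^+$, and must then run the division algorithm twice: once to upgrade minimality from positive $d$ to all $d \in \mathbb Z$, and once more to convert minimality into the divisibility $r \mid c$ for arbitrary $c \in \mathbb Z$. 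Your route yields a shorter and more transparent proof; the paper's stays closer to first principles but gains nothing further here. The one bookkeeping point --- shifting the representative of $s$ (resp.\ $u$) from $0$ to $\ord_M(y)$ (resp.\ $\ord_M(x)$) when $x^r$ (resp.\ $y^v$) equals $1_M$, so that all four exponents land in the prescribed intervals $\llb 1, \ord_M(x) \rrb$ and $\llb 1, \ord_M(y) \rrb$ --- is handled correctly.
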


\begin{proof} 
Set $m := \ord_H(x)$ and $n := \ord_H(y)$. Every cancellative torsion element $z \in H$ is a unit, and its order is the smallest integer $k \ge 1$ such that $z^k = 1_H$. Accordingly, we have $x^m = y^n = 1_H$, which, by the well-ordering principle, implies the existence of integers $r, u \in \llb 1, m \rrb$ and $s, v \in \llb 1, n \rrb$ such that 
\begin{equation}\label{pro-r,s:equ_01}
\text{(I) } x^r = y^s \text{ and } x^u = y^v;
\qquad
\text{(II) if } c, d \in \mathbb N^+ \text{ and } x^c = y^d, \text{ then }
r \le c \text{ and } v \le d. 
\end{equation}
With that said, we focus on proving part~(ii) of the statement for $r$, as the case for $v$ is symmetric.

First, assume $x^c = y^d$ with $c \in \mathbb N^+$ and $d \in \mathbb Z$. By the Division Algorithm \cite[p.~11]{Hung2003}, there exist $q \in \mathbb Z$ and $k \in \allowbreak \llb 0, \allowbreak n-1 \rrb$ such that $d = nq + k$. If $k = 0$, then $x^c = \allowbreak (y^n)^q = 1_H$, and hence, by the elementary properties of cyclic groups
\cite[Theorem I.3.4(iv)]{Hung2003}, $m \mid c$. Otherwise, $x^c = \allowbreak y^{nq + k} = \allowbreak (y^n)^q y^k = y^k$, which, by Eq.~\eqref{pro-r,s:equ_01} and the positivity of $k$, yields $r \le \allowbreak c$. In any case (recall that $r \le m$), this shows that
\begin{equation}\label{pro-r,s:equ_02}
\text{if } x^c = y^d \text{ for some }
c \in \mathbb N^+ \text{ and }
d \in \mathbb Z,
\text{ then }
r \le c.
\end{equation}

Next, suppose that $x^c = y^d$ with $c, d \in \mathbb Z$. Similarly as before, there exist $q \in \mathbb Z$ and $k \in \llb 0, r-1 \rrb$ such that $c = rq + k$, with the result that $y^d = x^{rq} x^{k} = y^{sq} x^k$ and hence $x^k = y^{d-sq}$. By Eq.~\eqref{pro-r,s:equ_02}, this is only possible if $k = 0$, which implies that $r \mid c$ and completes the proof.
\end{proof}

\section{The two-to-two property}
\label{sect:2-to-2-property}

Let $H$ and $K$ be monoids. In this short section, we show that all isomorphisms from the reduced finitary power monoid $\mathcal{P}_{\fin,1}(H)$ of $H$ to the reduced finitary power monoid of $K$ have a somewhat surprising property in common: they bijectively map sets of the form $\{1_H, x\} \subseteq H$ to sets of the form $\{1_K, \allowbreak y\} \subseteq \allowbreak K$, regardless of any hypotheses on $H$ and $K$ (Corollary~\ref{cor:iso-induces-a-bijection}). 
The proof is essentially combinatorial, as the main step (Theorem~\ref{thm:2-element-sets-to-2-elements-sets}) reduces to counting and bounding the number of solutions to certain equations involving product sets.

\begin{lemma}
\label{lemma:nr-of-reps}
Let $H$ be a monoid, $S$ a subset of $H$ containing the identity $1_H$, and $n$ an integer $\ge 3$. If $T \subseteq S$ and $1_H \notin T$, then $(S^{n-1} \setminus T)S = S^n$. In particular, the equation $AS = S^n$ has at least $2^{|S|-1}$ solutions $A \in \mathcal P(H)$ such that $1_H \in A$, and each of these solutions is finite whenever $S$ is.
\end{lemma}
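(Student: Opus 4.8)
The plan is to establish the set-theoretic identity $(S^{n-1}\setminus T)S = S^n$ first and then read off the counting statement as bookkeeping. The inclusion $(S^{n-1}\setminus T)S \subseteq S^n$ is immediate, since $S^{n-1}\setminus T \subseteq S^{n-1}$ gives $(S^{n-1}\setminus T)S \subseteq S^{n-1}S = S^n$. For the reverse inclusion I would first record that $1_M \in S$ forces the chain $S \subseteq S^2 \subseteq \cdots \subseteq S^{n-1} \subseteq S^n$, each inclusion being obtained by padding a product with a factor $1_M$. In particular $T \subseteq S \subseteq S^{n-1}$, so one has the clean decomposition $S^n = S^{n-1}S = (S^{n-1}\setminus T)S \cup TS$, and the whole problem reduces to the single inclusion $TS \subseteq (S^{n-1}\setminus T)S$.

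To handle this I would take an arbitrary $z \in TS$ and write $z = ts$ with $t \in T \subseteq S$ and $s \in S$, so that $z \in S^2$. Since $n \ge 3$, padding with $n-3$ copies of the identity gives $z = t\,s\,1_M^{n-3} \in S^{n-1}$; this is the one place where the hypothesis $n \ge 3$ is genuinely used. Now I split into two cases according to whether $z$ lies in $T$. If $z \notin T$, then $z \in S^{n-1}\setminus T$ and the factorization $z = z\cdot 1_M$ exhibits $z \in (S^{n-1}\setminus T)S$. If instead $z \in T \subseteq S$, I would use the complementary factorization $z = 1_M\cdot z$: here the left factor $1_M$ lies in $S^{n-1}\setminus T$ precisely because $1_M \notin T$, while the right factor $z$ lies in $S$. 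Either way $z \in (S^{n-1}\setminus T)S$, which closes the reverse inclusion and yields $(S^{n-1}\setminus T)S = S^n$.

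For the ``in particular'' clause I would assign to each subset $T$ of $S \setminus \{1_M\}$ the set $A_T := S^{n-1}\setminus T$. By the identity just proved, every $A_T$ solves $AS = S^n$, and $A_T$ is a legitimate element of $\mathcal P(M)$ because $1_M \in S^{n-1}\setminus T$ makes it non-empty. The assignment $T \mapsto A_T$ is injective: since $T \subseteq S^{n-1}$, one recovers $T = S^{n-1}\setminus A_T$, so distinct choices of $T$ yield distinct solutions. As $T$ ranges over all $2^{|S|-1}$ subsets of $S \setminus \{1_M\}$, this produces at least $2^{|S|-1}$ distinct solutions, as claimed.

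The argument is elementary, and its only subtlety stems from working in a possibly non-commutative monoid, where factors cannot be reordered at will. Consequently the point I would be most careful about is the asymmetric case split above: in each case one must verify that the \emph{left} factor of the exhibited product avoids $T$, which is exactly what $1_M \notin T$ (to handle $z \in T$) and $n \ge 3$ (to place $z$ inside $S^{n-1}$ when $z \notin T$) guarantee. I do not expect any serious obstacle beyond getting this bookkeeping right.
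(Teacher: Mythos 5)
Your proof is correct: the identity $(S^{n-1}\setminus T)S=S^n$, the precise role of the hypothesis $n\ge 3$, and the injectivity of $T\mapsto S^{n-1}\setminus T$ are all handled properly. Your route differs from the paper's in how the reverse inclusion $S^n\subseteq (S^{n-1}\setminus T)S$ is organized. The paper fixes $z\in S^n$, takes the smallest $k$ with $z\in S^k$, and splits into three cases according to whether $k\le 1$, $2\le k\le n-1$, or $k=n$; the last case requires factoring $z=s_1\cdots s_n$ and checking that the prefix $s_1\cdots s_{n-1}$ avoids $T$. You instead distribute the setwise product over the union $S^{n-1}=(S^{n-1}\setminus T)\cup T$ to get $S^n=(S^{n-1}\setminus T)S\cup TS$, reducing everything to the single inclusion $TS\subseteq(S^{n-1}\setminus T)S$, which you settle with the two re-factorizations $z=z\cdot 1_M$ (when $z\notin T$, using $TS\subseteq S^2\subseteq S^{n-1}$) and $z=1_M\cdot z$ (when $z\in T$). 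This buys a shorter argument with no need to track the minimal power of $S$ containing a given element and no analogue of the paper's Case 3, whereas the paper's version makes explicit exactly which elements of $S^n$ require a nontrivial re-factorization. Both proofs ultimately rest on the same two devices --- padding with $1_M$ on the right for elements outside $T$, and placing $1_M$ on the left for elements of $T$ --- so the difference is one of decomposition rather than of substance.
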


\begin{proof}
Let $T$ be a subset of $S \setminus \{1_H\}$, and define $Q := S^{n-1} \setminus T$. It is obvious that $QS \subseteq \allowbreak S^{n-1} S = \allowbreak S^n$, so we only need to check that $S^n \subseteq QS$. 
To this end, fix $z \in S^n$, and let $k$ be the smallest non-negative integer such that $z \in S^k$ (it is clear that $0 \le k \le n$; and since $1_H \in S$, we have $S \subseteq S^2 \subseteq \cdots \subseteq S^n$). Our goal is to show that $z \in QS$, and we distinguish three cases.

\begin{itemize}[leftmargin=0.3in]
\item\textsc{Case 1:} $k = 0$ or $k = 1$. If $k = 0$, then $z \in S^0 = \{1_H\}$ and thus $z = 1_H$. It follows that, regardless of whether $k = 0$ or $k = 1$, $z \in S$ and hence $z \in 1_H S \subseteq QS$ (note that $1_H \in Q$).

\vskip 0.05cm

\item\textsc{Case 2:} $2 \le k \le n-1$. We have $z \notin T$, or else $z \in S$ (by the fact that $T \subseteq S$), contradicting the definition itself of $k$ (which guarantees that $z \notin S^i$ for every non-negative integer $i < k$). It follows that $z \in S^k \setminus T \subseteq Q$ (by the fact that $S^k \subseteq S^{n-1}$), and hence $z \in Q1_H \subseteq QS$.

\vskip 0.05cm

\item\textsc{Case 3:} $k = n$. By the definition of $k$, we have that $z \notin S^i$ for each $i \in \llb 1, n-1 \rrb$. On the other hand, $z = s_1 \cdots s_n$ for some $s_1, \ldots, s_n \in S$. It follows that $z' := s_1 \cdots s_{n-1} \notin T$, or else $z = z' s_n \in S^2$, which is a contradiction because $2 < 3 \le n$. Therefore, $z' \in S^{n-1} \setminus T$ and hence $z = z's_n \in Q S$.
\end{itemize}

As for the second part of the statement, we have already observed that $S \subseteq S^{n-1}$. So, if $T_1$ and $T_2$ are distinct subsets of $S \setminus \{1_H\}$, then the sets $S^{n-1} \setminus T_1$ and $S^{n-1} \setminus T_2$ are likewise distinct. As a result, the equation $AS = S^n$ has at least as many solutions $A \in \mathcal P(H)$ with $1_H \in A$ as there are subsets of $S \setminus \{1_H\}$; namely, at least $2^{|S|-1}$ solutions. Moreover, if $S$ is finite, then each of these solutions is finite as well.
\end{proof}

\begin{theorem}
\label{thm:2-element-sets-to-2-elements-sets}
Let $f$ be an isomorphism from $\mathcal P_{\fin,1}(H)$ to $\mathcal P_{\fin,1}(K)$, where $H$ and $K$ are arbitrary monoids. Then $f(X)$ is a $2$-element set for every $2$-element set $X \in \mathcal P_{\fin,1}(H)$.
\end{theorem}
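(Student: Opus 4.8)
The plan is to exploit the fact that an isomorphism $f$ preserves, for every fixed element and every fixed equation, the number of its solutions, and to couple this with the lower bound of Lemma~\ref{lemma:nr-of-reps} in order to pin down the size of $f(X)$. Write $X = \{1_H, x\}$ with $x \ne 1_H$. First, $|f(X)| \ge 2$ is immediate: since $X \ne \{1_H\}$ and $f$ is an isomorphism with $f(\{1_H\}) = \{1_K\}$, we have $f(X) \ne \{1_K\}$. It remains to prove $|f(X)| \le 2$, and here I would split according to whether $x$ is torsion.

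Suppose first that $x$ has infinite order. I would count the solutions $A \in \mathcal P_{\fin,1}(H)$ of the equation $AX = X^3$. Any such $A$ satisfies $A = A\{1_H\} \subseteq AX = X^3 = \{1_H, x, x^2, x^3\}$; moreover $x^3 \notin A$, for otherwise $x^4 = x\cdot x^3 \in xA \subseteq X^3$, contradicting $\ord_H(x) = \infty$. Hence $A \subseteq \{1_H, x, x^2\}$, and inspecting the covering condition $A \cup xA = X^3$ leaves exactly the two solutions $\{1_H, x^2\}$ and $\{1_H, x, x^2\}$. Since $f$ is an isomorphism, $A \mapsto f(A)$ is a bijection between the solutions of $AX = X^3$ over $\mathcal P_{\fin,1}(H)$ and those of $B\,f(X) = f(X)^3$ over $\mathcal P_{\fin,1}(K)$; thus the latter equation has exactly two solutions. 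By the ``In particular'' part of Lemma~\ref{lemma:nr-of-reps} (with $S = f(X)$ and $n = 3$) that number is at least $2^{|f(X)|-1}$, so $2^{|f(X)|-1} \le 2$ and therefore $|f(X)| = 2$.

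The harder case, which I expect to be the main obstacle, is when $x$ is torsion, i.e.\ $d := \ord_H(x)$ is finite, because ``wrap-around'' then inflates the number of solutions of $AX = X^n$ and the direct count above no longer forces $|f(X)| = 2$ (already for $d = 4$ one finds five solutions of $AX = X^3$). The idea to get around this is to pass from $X$ to the idempotent $E := X^{d-1} = \langle x\rangle_H$, a submonoid of $H$ of size $d$. For any idempotent $E$ one has the exact identity
\[
\{A \in \mathcal P_{\fin,1}(H) : AE = E\} = \{A \in \mathcal P_{\fin,1}(H) : \{1_H\} \subseteq A \subseteq E\},
\]
since $A \subseteq AE = E$ is forced, while conversely $A \subseteq E$ gives $AE \subseteq E^2 = E$ and $AE \supseteq \{1_H\}E = E$; this count equals $2^{|E|-1} = 2^{d-1}$. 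As $f$ sends idempotents to idempotents and preserves solution counts, invariance yields $|\{B : B\,f(E) = f(E)\}| = 2^{d-1}$, and since $f(E)$ is idempotent Lemma~\ref{lemma:nr-of-reps} (with $S = f(E)$ and $f(E)^3 = f(E)$) gives $2^{d-1} \ge 2^{|f(E)|-1}$, whence $|f(E)| \le d$.

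Finally I would combine this with a chain argument. Because $1_H, x, \dots, x^{d-1}$ are pairwise distinct, the powers form a strictly increasing chain $X^0 \subsetneq X^1 \subsetneq \cdots \subsetneq X^{d-1} = E$ with $|X^i| = i+1$. Applying $f$, and using $f(X^i) = f(X)^i$ together with $\{1_K\} \subseteq f(X)$ (which makes the powers of $f(X)$ nested), produces a strictly increasing chain $\{1_K\} = f(X)^0 \subsetneq f(X)^1 \subsetneq \cdots \subsetneq f(X)^{d-1} = f(E)$ of $d$ subsets of $K$, so $|f(E)| \ge d$. Together with $|f(E)| \le d$ this forces every inclusion to add exactly one element, so $|f(X)^i| = i+1$ for all $i$ and in particular $|f(X)| = |f(X)^1| = 2$. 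The crux is thus the torsion case: replacing the element $X$ (whose solution count is hard to control) by its idempotent power $E$ (whose solution count is exactly $2^{|E|-1}$), after which the subset-chain does the rest.
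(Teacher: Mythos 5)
Your proof is correct, but it takes a genuinely different route from the paper's. The paper works throughout with the single equation $AX = X^3$ and the lower bound of Lemma~\ref{lemma:nr-of-reps}, then runs a four-way case analysis on the order $m$ of the cyclic submonoid generated by $x$ ($m = 2$, $3$, $4$, and $\ge 5$), in each case bounding the number of solutions from above (or, for $m = 4$, playing a lower bound of $2^k + 2^k$ against an upper bound of $7$) so as to contradict the assumption $|f(X)| \ge 3$. Your argument replaces this case analysis by two cleaner mechanisms: for $x$ of infinite order you compute the exact solution count (two) of $AX = X^3$, which is essentially the paper's Case 4; for $x$ torsion of order $d$ you pass to the idempotent $E = X^{d-1} = \langle x \rangle_H$, observe that the solutions of $AE = E$ are exactly the sets $A$ with $\{1_H\} \subseteq A \subseteq E$ (so the count is exactly $2^{|E|-1}$, not merely at least that), deduce $|f(E)| = d$ by invariance, and then squeeze $|f(X)| = 2$ out of the strictly increasing chain $\{1_K\} \subsetneq f(X) \subsetneq \cdots \subsetneq f(X)^{d-1} = f(E)$, whose $d$ terms must have sizes $1, 2, \ldots, d$. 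What your approach buys is the complete elimination of the delicate finite-order cases (in particular the paper's $m = 4$ case, the only place where Lemma~\ref{lemma:nr-of-reps} has to be invoked twice); the price is that you must leave the fixed set $X^3$ and work with the possibly much larger idempotent $E$. Only minor slips: $AX = A \cup Ax$ rather than $A \cup xA$ (harmless, as the argument is left--right symmetric), and in the torsion case the appeal to Lemma~\ref{lemma:nr-of-reps} is actually optional, since your exact-count identity for idempotents applies verbatim to $f(E)$ and gives $|f(E)| = d$ outright.
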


\begin{proof}
The claim is vacuously true if $|H| = 1$. Otherwise, fix a non-identity element $x \in H$, and set 
\[
X := \allowbreak \{1_H, x\},
\quad
Y := f(X),
\quad\text{and}
\quad
k := |Y| - 1. 
\]
Considering that $1_K \in Y$ and $Y \ne \allowbreak f(\{1_H\}) = \allowbreak \{1_K\}$ (by the in\-jec\-tivity of $f$), it is clear that $k$ is a \emph{positive} integer. We need to prove that $k = 1$. To this end, let us define
$$
\mathcal{C}_X := \{A \in \mathcal P_{\fin,1}(H) \colon AX = X^3\}
\quad\text{and}\quad
\mathcal{D}_Y := \{B \in \mathcal P_{\fin,1}(K) \colon BY = Y^3\}. 
$$
Since $1_H \in X$, it is clear that $A \in \mathcal{C}_X$ yields $A \subseteq X^3$ and $f(A) Y = Y^3$. In a similar way, $B \in \mathcal{D}_Y$ yields $B \subseteq Y^3$ and $f^{-1}(B) X = X^3$. We thus see that $d := \allowbreak |\mathcal{C}_X| = \allowbreak |\mathcal{D}_Y| \in \mathbb N^+$, for $X^3$ is a finite set and the restriction of $f$ to $\mathcal{C}_X$ establishes a bijection $\mathcal{C}_X \to \mathcal{D}_Y$.

With these preliminaries in place, suppose for a contradiction that $k \ge 2$. We then get from the above and Lemma \ref{lemma:nr-of-reps} (applied with $S = Y$, $n = 3$, and $K$ in place of $H$) that $d = |\mathcal{D}_Y| \ge 4$. Let $m$ be the order of the sub\-monoid $\{1_H, x, x^2, \ldots\}$ of $H$ generated by $x$; note that $m \ge 2$ (by the fact that $x \ne 1_H$). We will distinguish four cases, depending on whether $m = 2$, $m = 3$, $m = 4$, or $m \ge 5$.

\vskip 0.05cm

\textsc{Case 1:} $m = 2$. We have $x^2 \in X$ and hence $X^3 = X$. Consequently, $A \in \mathcal{C}_X$ only if $A \subseteq X$, which shows in turn that $d = |\mathcal{C}_X| = 2$ and contradicts our previous finding (that $d \ge 4$).

\vskip 0.05cm

\textsc{Case 2:} $m = 3$. 
We have $x^3 \in X^2$ and $x^2 \notin X$, with the result that $X^3 = X^2 \ne X$ and, hence, $A \in C_X$ only if $\{1_H\} \subsetneq A \subseteq X^3 = \{1_H, x, x^2\}$. It follows that $d = |\mathcal{C}_X| = 3$, which is again absurd. 

\vskip 0.05cm

\textsc{Case 3:} $m = 4$. We have $x^4 \in X^3$ and $x^3 \notin X^2$ (or else $m \le 3$), implying that $X^2 \ne X^3 = X^4$ and hence $Y^2 \ne Y^3 = Y^4$. Since $Y \subseteq Y^2 \subseteq Y^3$, it follows from Lemma~\ref{lemma:nr-of-reps} (applied with $K$ in place of $H$, first for $n = 3$ and then for $n = 4$) that, for \textit{all} subsets $T$ and $T{\kern0.1em}'$ of $Y$ not containing the identity $1_H$, the sets
$Y^2 \setminus T$ and $Y^3 \setminus T{\kern0.1em}'$ are \textit{distinct} solutions $B \in \mathcal P_{\fin,1}(K)$ to the equation $BY = Y^3$ (note that, since $Y^3 = Y^4$, every solution $B$ to $BY = Y^3$ is also a solution to $BY = Y^4$, and vice versa). Consequently, 
\[
d = |\mathcal{D}_Y| \ge 2^k + 2^k \ge 8. 
\]
If, on the other hand, $A \in \allowbreak \mathcal{C}_X$, then $\{1_H\} \neq \allowbreak A \ne \allowbreak \{1_H, x\}$ (or else $AX \subseteq X^2 \subsetneq X^3$). Namely, $A \in \mathcal{C}_X$ only if $A = \allowbreak \{1_H\} \cup \allowbreak A'$ for some $A' \subseteq \allowbreak X^3 \setminus \{1_H\} = \allowbreak \{x, x^2, x^3\}$ with $\emptyset \ne A' \ne \{x\}$. But this means that $8 \le d = \allowbreak |\mathcal{C}_X| \le 2^3 - 1 = 7$, which is still a contradiction. 

\vskip 0.05cm

\textsc{Case 4:} $m \ge 5$. We have $x^4 \notin X^3$ (or else $m \le 4$) and $x^3 \notin X^2$ (or else $m \le 3$). So, if $A \in \mathcal{C}_X$, then $x^3 \notin A$ and hence $A \subseteq X^2$; moreover, $A \ne \{1_H\}$ (or else $AX = X \ne X^3)$ and $x^2 \in A$ (or else $A \subseteq X$ and hence $AX \subseteq X^2 \subsetneq X^3$). It follows that $A \in \mathcal{C}_X$ if and only if $A = \{1_H, x^2\}$ or $A = X^2$, with the result that $d = |\mathcal{C}_X| = 2$. However, this is once more impossible and finishes the proof.
\end{proof}

\begin{corollary}
\label{cor:iso-induces-a-bijection}
Let $f$ be an isomorphism from $\mathcal P_{\fin,1}(H)$ to $ \mathcal P_{\fin,1}(K)$, where $H$ and $K$ are arbitrary monoids. There is then a \textup{(}uniquely determined\textup{)} bijection $g \colon H \to K$ such that $f(\{1_H, \allowbreak x\}) = \allowbreak \{1_K, g(x)\}$ for each $x \in H$. In particular, $g(1_H) = 1_K$.
\end{corollary}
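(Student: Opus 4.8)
The plan is to read off the bijection $g$ directly from the action of $f$ on two-element sets, with Theorem \ref{thm:2-element-sets-to-2-elements-sets} supplying the one substantive ingredient; once $g$ is defined, checking that it is a well-defined bijection is essentially bookkeeping, and no further structural facts about $\mathcal P_{\fin,1}$ are required.

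First I would record the trivial but crucial observation that, by the very definition of the reduced finitary power monoid, every element of $\mathcal P_{\fin,1}(K)$ contains $1_K${\kern0.1em}; in particular $1_K \in f(\{1_H, x\})$ for every $x \in H$. For $x \ne 1_H$ the set $\{1_H, x\}$ has exactly two elements, so Theorem \ref{thm:2-element-sets-to-2-elements-sets} guarantees that $f(\{1_H, x\})$ also has exactly two elements; combined with $1_K \in f(\{1_H, x\})$, this forces $f(\{1_H, x\}) = \{1_K, y\}$ for a \emph{unique} $y \ne 1_K$, and I would set $g(x) := y$. For the degenerate point $x = 1_H$ I would simply put $g(1_H) := 1_K$; since $f$ is a monoid isomorphism it sends the identity $\{1_H\}$ to the identity $\{1_K\}$, so the required equality $f(\{1_H, 1_H\}) = \{1_K, g(1_H)\}$ holds (both sides being $\{1_K\}$). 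This yields a function $g \colon H \to K$ with $f(\{1_H, x\}) = \{1_K, g(x)\}$ for all $x$, and in particular $g(1_H) = 1_K$.

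Next I would verify that $g$ is a bijection. For injectivity, $g(x) = g(x')$ gives $f(\{1_H, x\}) = \{1_K, g(x)\} = \{1_K, g(x')\} = f(\{1_H, x'\})$, whence $\{1_H, x\} = \{1_H, x'\}$ by the injectivity of $f$, and a short case distinction according to whether the points equal $1_H$ forces $x = x'$. For surjectivity, given $y \in K$ with $y \ne 1_K$, I would apply Theorem \ref{thm:2-element-sets-to-2-elements-sets} to the inverse isomorphism $f^{-1} \colon \mathcal P_{\fin,1}(K) \to \mathcal P_{\fin,1}(H)$: the two-element set $\{1_K, y\}$ then has a two-element image $f^{-1}(\{1_K, y\})$, which contains $1_H$ and is therefore of the form $\{1_H, x\}$ with $x \ne 1_H$, so that $g(x) = y$; the value $y = 1_K$ is attained by $g(1_H)$.

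Finally, uniqueness is immediate: any $g'$ with the stated property satisfies $\{1_K, g(x)\} = \{1_K, g'(x)\}$ for every $x$, which forces $g = g'$ by the same case check used for injectivity. I do not expect a genuine obstacle in this corollary, since all of the combinatorial difficulty has already been absorbed into Theorem \ref{thm:2-element-sets-to-2-elements-sets}; the only point demanding a little care is the uniform handling of the degenerate case $x = 1_H$ (equivalently $y = 1_K$), where the ``two-element set'' collapses to the singleton identity and the symmetric appeal to $f^{-1}$ must be set up correctly.
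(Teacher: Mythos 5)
Your proposal is correct and follows essentially the same route as the paper: define $g$ via Theorem \ref{thm:2-element-sets-to-2-elements-sets} applied to $f$, check injectivity from the injectivity of $f$, and obtain surjectivity by applying the same theorem to the inverse isomorphism $f^{-1}$. The extra care you take with the degenerate case $x = 1_H$ is harmless and matches the paper's observation that $f(\{1_H\}) = \{1_K\}$.
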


\begin{proof}
Since $f$ maps $\{1_H\}$ to $\{1_K\}$, we gather from Theorem \ref{thm:2-element-sets-to-2-elements-sets} that, for all $x \in H$, there is a (unique) element $y \in K$ with the property that $f(\{1_H, x\}) = \{1_K, y\}$. Namely, there exists a function $g \colon H \to K$ such that $g(1_H) = 1_K$ and $f(\{1_H, x\}) = \{1_K, g(x)\}$ for all $x \in H$. It remains to see that $g$ is bijective.

To begin, fix $u, v \in H$ with $u \ne v$. We have from the above and the injectivity of $f$ that 
$$
\{1_K, g(u)\} = \allowbreak f(\{1_H, u\}) \ne f(\{1_H, v\}) = \{1_K, g(v)\}, 
$$
which is only possible if $g(u) \ne g(v)$. Consequently, $g$ is itself injective.

Next, let $y \in K$. Since $f$ is an isomorphism $\mathcal P_{\fin,1}(H) \to \mathcal P_{\fin,1}(K)$, its inverse $f^{-1}$ is an isomorphism from $\mathcal P_{\fin,1}(K)$ to $\mathcal P_{\fin,1}(H)$. So, again from Theorem \ref{thm:2-element-sets-to-2-elements-sets}, we have that $f^{-1}(\{1_K, y\}) = \allowbreak \{1_H, x\}$ for some $x \in H$. It follows that $\{1_K, y\} = f(\{1_H, x\}) = \{1_K, g(x)\}$ and hence $g(x) = y$, which shows that $g$ is also surjective and completes the proof.
\end{proof}

It remains an open question whether Theorem~\ref{thm:2-element-sets-to-2-elements-sets} can be generalized to show that, for arbitrary monoids $H$ and $K$, every isomorphism from $\mathcal P_{\fin,1}(H)$ to $\mathcal P_{\fin,1}(K)$ is cardinality-preserving (the theorem yields a positive answer for sets of cardinality one or two); this remains unclear even in the cancellative case. Nevertheless, the result provides a far-reaching generalization of \cite[Lemma 2.3]{Tri-Yan2023(b)} and motivates the following definition, which plays a crucial role in this work.

\begin{definition}\label{def:pullback}
Let $H$ and $K$ be monoids. Based on Corollary \ref{cor:iso-induces-a-bijection}, we let the \evid{pullback} of an isomorphism $f \colon \mathcal P_{\fin,1}(H) \to \mathcal P_{\fin,1}(K)$ be the unique bijection $H \to K$ such that $f(\{1_H, x\}) = \{1_K, g(x)\}$ for all $x \in H$.
\end{definition}

We devote the next section to studying some of the properties of the pullback. The ultimate goal is to prove that, under certain circumstances, the pullback is actually an isomorphism (Proposition \ref{pro-main}).
\section{The pullback and its properties}
\label{sect:04}

Most of the results proved in this section (specifically, Propositions \ref{prop:g(x^k)=g(x)^k} and \ref{prop:g(x^k)=g(x)^k}, as well as 
Lemmas \ref{lem:g(xy)=g(x)g(y)orx^2y^2=1} and \ref{lem-x^2=1}) rely on cancellativity, and it would be interesting to see whether they extend to more general settings. 
More precisely, we first show that pullbacks (Definition~\ref{def:pullback}) 
are order-preserving, which holds for arbitrary monoids. We then establish 
that, at least for cancellative monoids, they map $k$-powers to $k$-powers 
for all $k \in \mathbb{N}$ (Proposition~\ref{prop:g(x^k)=g(x)^k}).

\begin{proposition}\label{prop:fix-ord}
Let $g$ be the pullback of an isomorphism $f$ from $\mathcal P_{\fin,1}(H)$ to $\mathcal P_{\fin,1}(K)$, where $H$ and $K$ are arbitrary monoids. Then $\mathrm{ord}_H(x) = \mathrm{ord}_K(g(x))$ for every $x \in H$.
\end{proposition}

\begin{proof}
Fix an element $x \in H$, and denote by $n$ its order. Our objective is to show that $m := \mathrm{ord}_K(g(x)) = \allowbreak n$. We distinguish two cases, depending on whether $n$ is finite or not.

\vskip 0.15cm

\textsc{Case 1:} $n < \infty$. By Lemma~\ref{lem_2.1}, $n$ is the smallest integer $k \geq 1$ such that 
\[
    \{1_H, x\}^k = \{1_H, x\}^{k-1}.
\]
Since, for all $X \in \mathcal{P}_{\mathrm{fin},1}(H)$ and $r \in \mathbb{N}$, the equality $X^r = X^{r-1}$ holds if and only if $f(X)^r = f(X)^{r-1}$, it follows that $n$ is also the smallest integer $k \geq 1$ for which 
\[
    \{1_K, g(x)\}^k = f(\{1_H, x\})^k = f(\{1_H, x\})^{k-1} = \{1_K, g(x)\}^{k-1}.
\]
Therefore, by reapplying Lemma~\ref{lem_2.1}, we conclude that $m = n$.

\vskip 0.15cm

\textsc{Case 2:} $n = \infty$. Given  $i, j \in \mathbb{N}$ with $i < j$, we infer from the basic properties of cyclic semigroups \cite[Theorem~1.2.2(1)]{Ho95} that 
$x^i \neq x^j$. This in turn implies
$\{1_H, x\}^i \subsetneq \{1_H, x\}^j$, and consequently
\[
    \{1_K, g(x)\}^i = f\bigl(\{1_H, x\}^i\bigr) \neq f\bigl(\{1_H, x\}^j\bigr) = \{1_K, g(x)\}^j.
\]
It follows that $m = \infty = n$, completing the proof.
\end{proof}

\begin{lemma}\label{lem:x^k-2k<ord}
Let $g$ be the pullback of an isomorphism $f \colon \mathcal P_{\fin,1}(H) \to \mathcal P_{\fin,1}(K)$, where $H$ and $K$ are monoids with $K$ cancellative. If $x \in H$ and $k \in \mathbb N$, then $g(x^k) = g(x)^\ell$ for some $\ell \in \mathbb N$ with $\ell \le k$.
\end{lemma}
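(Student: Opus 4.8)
The plan is to establish the identity $g(x^k) = g(x)^\ell$ with the constraint $\ell \le k$ by induction on $k$, exploiting the combinatorial relations that the pullback $g$ transports between the power monoids $\mathcal P_{\fin,1}(H)$ and $\mathcal P_{\fin,1}(K)$. The base cases $k=0$ and $k=1$ are immediate: $g(x^0) = g(1_H) = 1_K = g(x)^0$ by Corollary~\ref{cor:iso-induces-a-bijection}, and $g(x^1) = g(x)^1$ trivially. For the inductive step, the key is to find a set-theoretic identity in $\mathcal P_{\fin,1}(H)$ that involves $\{1_H, x^k\}$ and $\{1_H, x\}$, push it through $f$, and then read off the conclusion about $g(x^k)$ in $K$.

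\emph{The natural relation to exploit} is the first part of Lemma~\ref{lem:smallest-integer}, which gives $\{1_H, x^k\}\{1_H, x\}^r = \{1_H, x\}^{k+r}$ whenever $r \ge k-1$. Taking $r = k-1$ (or any admissible value) and applying $f$, which is a monoid isomorphism with pullback $g$, I would use $f(\{1_H, x^k\}) = \{1_K, g(x^k)\}$ and $f(\{1_H, x\}) = \{1_K, g(x)\}$ to obtain
\[
\{1_K, g(x^k)\}\,\{1_K, g(x)\}^{k-1} = \{1_K, g(x)\}^{2k-1}.
\]
The element $g(x^k)$ therefore lies in $\{1_K, g(x)\}^{2k-1}$, which means $g(x^k) = g(x)^\ell$ for some $\ell \in \llb 0, 2k-1 \rrb$. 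This already yields the ``$k$-power is a power of the image'' conclusion, but \textbf{only} with the weaker bound $\ell \le 2k-1$; the real work is to sharpen this to $\ell \le k$.

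\emph{The sharpening} is where I expect the cancellativity of $K$ to enter decisively, and this is the main obstacle. The idea is to suppose $\ell \ge k+1$ and derive a contradiction using the second (rigidity) part of Lemma~\ref{lem:smallest-integer}. Concretely, writing $g(x^k) = g(x)^\ell$ with $\ell \ge k+1$, I would substitute this into a suitable product identity and compare it against the genuine power $\{1_K, g(x)\}^{s}$ for a carefully chosen exponent $s$; applying $f^{-1}$ back to $H$ and invoking that $g$ preserves order (Proposition~\ref{prop:fix-ord}, so $\ord_K(g(x)) = \ord_H(x) \ge k$ when $x$ has large enough order) should let me apply the inequality clause of Lemma~\ref{lem:smallest-integer} — which asserts $\{1_K, g(x)^\ell\}\{1_K, g(x)\}^r \ne \{1_K, g(x)\}^s$ for $r < \ell - 1 \le s$ — to contradict the transported equation. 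The delicate point is matching the exponents so that both the hypothesis $r < \ell-1 \le s$ of that lemma and the condition ``$\ell$ no larger than the order of $g(x)$'' are simultaneously met; I would treat separately the case where $\ord_H(x)$ is finite and small (handling it directly, since then $x^k$ cycles and few powers are available) versus the generic case, and use the equation from Lemma~\ref{lem:smallest-integer} run in $H$ as the source whose $f$-image pins down $\ell$ exactly.
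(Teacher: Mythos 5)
Your proposal is correct and follows essentially the same route as the paper's proof: apply the first part of Lemma~\ref{lem:smallest-integer} with $r = k-1$, push the resulting identity through $f$ to get $g(x^k) = g(x)^\ell$ with $\ell \le 2k-1$, and then invoke the rigidity (second) part of that lemma with $r = k-1$ and $s = 2k-1$ to force $\ell - 1 \le k-1$. The ``delicate point'' you flag is resolved exactly as you anticipate --- by first reducing to $1 \le k < \ord_H(x)$ (since $x^k = x^i$ for some $i < \ord_H(x)$) and normalizing $\ell$ to lie in $\llb 1, \min(\ord_K(g(x))-1,\,2k-1)\rrb$ via Proposition~\ref{prop:fix-ord} --- so the only cosmetic difference is that your induction framing is vestigial, the argument for each $k$ being direct.
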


\begin{proof}
Fix $x \in H$ and $k \in \mathbb{N}$, and set $y := g(x)$. We need to show that $g(x^k) = y^\ell$ for some $\ell \in \llbracket 0, k \rrbracket$. 

To start with, we have from Corollary \ref{cor:iso-induces-a-bijection} and Proposition \ref{prop:fix-ord} that
\begin{equation}
\label{lem:x^k-2k<ord:equ_01}
g(x^0) = g(1_H) = 1_K = y^0
\quad\text{and}\quad
n := \ord_H(x) = \ord_K(y).
\end{equation}
Since $x^k = x^i$ for some $i \in \llb 0, n-1 \rrb$ (including the case $n = \infty$, where $\llb 0, n-1 \rrb = \mathbb N$ and $i = k$), 
we will therefore assume in the remainder that $1 \leq k < n$. By the first part of Lemma~\ref{lem:smallest-integer}, this yields 
$$
\{1_H, x^k\} \{1_H, x\}^{k-1} = \{1_H, x\}^{2k-1},
$$
which, by taking the image of both sides under $f$, leads to
\begin{equation*}
g(x^k) \in \{1_K, g(x^k)\} \{1_K, y\}^{k-1} =  
\{1_K, y\}^{2k-1}.
\end{equation*}
It follows that $g(x^k) = y^\ell$ for some $\ell \in \llb 0, 2k-1 \rrb$, and consequently
\begin{equation}
\label{lem:x^k-2k<ord:equ_02}
\{1_K, y^\ell\} \{1_K, y\}^{k-1} = \{1_K, y\}^{2k-1}.
\end{equation}
Considering that $g$ is injective and $1 \leq k < n$, we may actually assume by Eq.~\eqref{lem:x^k-2k<ord:equ_01} that 
$$
1 \le \ell \le \min(n-1, 2k-1);
$$
in particular, $n$ being the order of $y$ in $K$ guarantees that $y^\ell = y^j$ for some $j \in \llb 0, n-1 \rrb$. Since $K$ is, by hypothesis, a cancellative monoid, it is then clear from Eq.~\eqref{lem:x^k-2k<ord:equ_02} and the second part of Lemma~\ref{lem:smallest-integer} (applied with $z = y$, $r = \allowbreak k - \allowbreak 1$, $s = \allowbreak 2k-\allowbreak 1$, and $K$ in place of $H$) that $\ell - 1 \leq \allowbreak k - \allowbreak 1$, namely, $\ell \leq k$.
\end{proof}

\begin{proposition}\label{prop:g(x^k)=g(x)^k}
Let $g$ be the pullback of an isomorphism $f$ from $\mathcal P_{\fin,1}(H)$ to $\mathcal P_{\fin,1}(K)$, where $H$ and $K$ are cancellative monoids. Then $g(x^k) = g(x)^k$ for all $x \in H$ and $k \in \mathbb N$.
\end{proposition}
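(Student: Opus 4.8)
The plan is to promote the one-sided estimate of Lemma~\ref{lem:x^k-2k<ord} to an equality by pairing it with the injectivity of the pullback. Fix $x \in H$ and set $y := g(x)$ and $n := \ord_H(x)$; by Proposition~\ref{prop:fix-ord} we also have $n = \ord_K(y)$. Since $g(x^0) = 1_K = y^0$, the identity $g(x^k) = y^k$ is trivial for $k = 0$, so assume $k \ge 1$. By Lemma~\ref{lem:x^k-2k<ord}, every $g(x^k)$ lies in the cyclic submonoid $\langle y \rangle_K$, and we may write $g(x^k) = y^{a_k}$ for some exponent $a_k$ with $0 \le a_k \le k$. Everything then reduces to showing $a_k = k$.

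First I would handle the case $n = \infty$ together with the finite case restricted to the initial segment $\llb 0, n-1 \rrb$, as the two are formally parallel. In both situations the relevant powers of $y$ are pairwise distinct (for $n = \infty$ all of $y^0, y^1, \dots$ are distinct by the structure of infinite monogenic semigroups \cite[Theorem~1.2.2(1)]{Ho95}; for $n < \infty$ the powers $y^0, \dots, y^{n-1}$ are distinct by the definition of order), so the exponent $a_k$ is uniquely determined. Because $g$ is injective and $k \mapsto x^k$ is injective on the relevant index set, the assignment $k \mapsto a_k$ is itself injective. The key elementary observation is then that an injective self-map $\sigma$ of $\mathbb N$ (respectively, of a finite initial segment $\llb 0, N \rrb$) satisfying $\sigma(k) \le k$ for all $k$ must be the identity: this follows by strong induction, since $\sigma(k) \le k$ together with $\sigma(k) \notin \{\sigma(0), \dots, \sigma(k-1)\} = \{0, \dots, k-1\}$ forces $\sigma(k) = k$. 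Applying this to $\sigma = (k \mapsto a_k)$ gives $a_k = k$, that is $g(x^k) = y^k$, for every $k$ when $n = \infty$ and for every $k \in \llb 0, n-1 \rrb$ when $n < \infty$.

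It remains, in the finite case, to extend the equality from $\llb 0, n-1 \rrb$ to all of $\mathbb N$, and this is where I would invoke cancellativity of \emph{both} $H$ and $K$: a cancellative element of finite order is a unit, so $x$ and $y$ generate finite cyclic groups of order $n$, whence $x^k = x^{k \bmod n}$ and $y^k = y^{k \bmod n}$ by the elementary theory of cyclic groups \cite[Theorem~I.3.4]{Hung2003}. Thus $g(x^k) = g(x^{k \bmod n}) = y^{k \bmod n} = y^k$, completing the argument. The only genuinely delicate point is the middle paragraph: Lemma~\ref{lem:x^k-2k<ord} supplies only the upper bound $a_k \le k$, and the symmetric lower bound is not directly available (applying the lemma to $f^{-1}$ bounds the exponents of $g^{-1}$, not those of $g$), so the matching lower bound has to come entirely from injectivity — which is precisely what the ``bounded injective self-map is the identity'' observation delivers.
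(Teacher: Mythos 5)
Your argument is correct, and it takes a genuinely different route at the one point where the proof has real content. Both your proof and the paper's reduce to exponents in $\llb 0, n-1 \rrb$ with $n := \ord_H(x)$, invoke Lemma~\ref{lem:x^k-2k<ord} to write $g(x^k) = g(x)^{a_k}$ with $a_k \le k$, and conclude by reducing exponents modulo $n$ (using that cancellative torsion elements are units of the expected order). The difference lies in how the bound $a_k \le k$ is promoted to an equality. The paper applies Lemma~\ref{lem:x^k-2k<ord} a second time, to the inverse isomorphism $f^{-1}$: its pullback is $g^{-1}$, so one gets $g^{-1}(g(x)^{a_k}) = x^{m}$ with $m \le a_k \le k < n$; since the left-hand side equals $x^k$ and the powers $x^0, \ldots, x^{n-1}$ are pairwise distinct, the chain collapses to $m = a_k = k$. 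You instead apply the lemma only once and argue globally: $k \mapsto a_k$ is an injective self-map of $\llb 0, n-1 \rrb$ (respectively, of $\mathbb N$) dominated by the identity, hence equal to the identity by strong induction --- a clean pigeonhole argument. Both routes are short and sound. The paper's sandwich works pointwise for a single fixed $k$ with no bookkeeping over the whole family of exponents; yours avoids introducing the pullback of $f^{-1}$ altogether and, in the infinite-order case, does not actually use the cancellativity of $H$ (only that of $K$, through Lemma~\ref{lem:x^k-2k<ord}). Your closing remark that the symmetric lower bound is ``not directly available'' from $f^{-1}$ is slightly too pessimistic: the paper shows it can be extracted by composing the two one-sided bounds, since $g^{-1}$ is the pullback of $f^{-1}$. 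This does not affect the validity of your proof.
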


\begin{proof}
Fix $x \in H$ and $k \in \mathbb{N}$. The inverse $f^{-1}$ of $f$ is an isomorphism $\mathcal{P}_{\fin,1}(K) \to \mathcal{P}_{\fin,1}(H)$. Accordingly, let $h$ denote the pullback of $f^{-1}$, and set $y := g(x)$ and $z := h(y)$.

First, observe that $x^k = x^r$ for some $r \in \llbracket 0, n-1 \rrbracket$, where  
$n := \ord_H(x)$. Since $H$ is cancellative, we additionally have $k \equiv r \bmod n$,  
with the convention that this congruence reduces to $k = r$ when $n = \infty$.
On the other hand, Lemma~\ref{lem:x^k-2k<ord} guarantees that there exist $\ell, m \in \mathbb{N}$ with $m \le \ell \le r$ such that $g(x^r) = y^\ell$  
and $h(y^\ell) = z^m$; and it is actually clear from our definitions that $h = g^{-1}$ and hence $z = x$, where $g^{-1}$ denotes the inverse of $g$ (recall from Corollary~\ref{cor:iso-induces-a-bijection} that $g$ is a bijection).

Thus, $x^r = g^{-1}(y^\ell) = x^m$. Since $0 \le m \le r < n = \ord_H(x)$, it follows that $r = m$. Consequently, we gather from the above that $\ell = r$  
and $g(x^k) = g(x^r) = y^r$. Now, considering that $K$ is also a cancellative monoid and, by Proposition~\ref{prop:fix-ord}, the order of $y$ in $K$ is the same as the order of $x$ in $H$, we conclude from the congruence $k \equiv r \bmod n$ that $g(x^k) = y^r = y^k = g(x)^k$.
\end{proof}

\begin{example}
Let $H$ be a cyclic group of order $2$, and let $K$ be an idempotent monoid of order $2$. 
Note that $H$ and $K$ are unique up to isomorphism; in particular, $K$ can be realized as the submonoid $\{0,1\}$ of the non-negative integers under multiplication.

The function $f \colon \mathcal P_{\fin,1}(H) \to \mathcal P_{\fin,1}(K)$ that maps $\{1_H\}$ to $\{1_K\}$ and $H$ to $K$ is a (monoid) isomorphism, whose pullback is the map $g \colon H \to K$ sending $1_H$ to $1_K$ and the unique non-identity element $x \in H$ to the unique non-identity element $y \in K$. 
However, we have $x^2 = 1_H$, and hence
\[
1_K = g(1_H) = g(x^2) \ne g(x)^2 = y.
\]
This ultimately proves that Proposition \ref{prop:g(x^k)=g(x)^k} need not be true when the monoids in its statement are not \textit{both} cancellative.
\end{example}

We continue with a series of lemmas that will eventually allow us to prove that, in the  
cancellative setting, pullbacks behave like semigroup homomorphisms when restricted  
to the product of two torsion elements (Proposition \ref{pro-main}).

\begin{lemma}
\label{lem:g(xy)=g(x)g(y)orx^2y^2=1}
Let $g$ be the pullback of an isomorphism $f$ from $\mathcal{P}_{\fin,1}(H)$ to $\mathcal{P}_{\fin,1}(K)$, where $H$ and $K$ are cancellative monoids. If $x, y \in H$ are torsion elements, then either $g(xy) = g(x)g(y)$ or $x^2 y^2 = 1_H$.
\end{lemma}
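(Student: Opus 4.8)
The plan is to transport the polynomial-type identities of Lemma~\ref{lem-equation-x^m=y^n} across $f$ and then to read off the value of $g(xy)$ from a comparison of two set-theoretic expansions. First I would record the basic data. Since $x,y$ are cancellative torsion elements they are units of finite order; set $a := g(x)$, $b := g(y)$, and $c' := g^{-1}(ab)$, where $g^{-1}$ is the pullback of the isomorphism $f^{-1}\colon \mathcal P_{\fin,1}(K)\to\mathcal P_{\fin,1}(H)$ (recall from the proof of Proposition~\ref{prop:g(x^k)=g(x)^k} that the pullback of $f^{-1}$ is precisely the inverse bijection of $g$). By Proposition~\ref{prop:fix-ord} orders are preserved, and by Proposition~\ref{prop:g(x^k)=g(x)^k} both $g$ and $g^{-1}$ carry $k$-th powers to $k$-th powers. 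In particular, $a,b$ are cancellative torsion elements of $K$, so Proposition~\ref{prop:x^r=y^s} supplies $r,s\in\mathbb N^+$ with $a^r=b^s$; applying $g^{-1}$ and using injectivity, the same relation transfers to $H$ as $x^r=y^s$. If $x\in\langle y\rangle_H$ or $y\in\langle x\rangle_H$ --- equivalently, since $g$ is an order-preserving, power-respecting bijection, if $a\in\langle b\rangle_K$ or $b\in\langle a\rangle_K$ --- then $xy$ is a power of a single generator and the power-respecting property forces $g(xy)=g(x)g(y)$ outright; so I would assume henceforth that $r,s\ge 2$, as provided by Proposition~\ref{prop:x^r=y^s}.

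Next I would manufacture two governing identities \emph{in} $H$. Applying Lemma~\ref{lem-equation-x^m=y^n} to the pair $a,b\in K$ (legitimate because $a^r=b^s$) and pushing the resulting equalities \eqref{equ-x^m=y^n-1} and \eqref{equ-x^m=y^n-2} through the homomorphism $f^{-1}$, which satisfies $f^{-1}(\{1_K,w\})=\{1_H,g^{-1}(w)\}$ by Corollary~\ref{cor:iso-induces-a-bijection}, yields
\[
\{1_H,x\}^{r-1}\{1_H,c'\}\{1_H,y\}^{s} = \{1_H,x\}^{r}\{1_H,y\}^{s+1}
\]
together with the analogue coming from \eqref{equ-x^m=y^n-2}. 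Applying Lemma~\ref{lem-equation-x^m=y^n} directly to $x,y\in H$ (using $x^r=y^s$) produces the very same right-hand sides with $c'$ replaced by $xy$. Cancelling the common right-hand sides, I obtain the two core identities
\[
\{1_H,x\}^{r-1}\{1_H,c'\}\{1_H,y\}^{s} = \{1_H,x\}^{r-1}\{1_H,xy\}\{1_H,y\}^{s}
\]
and its companion with exponents $r$ and $s-1$. Crucially, the conclusion $g(xy)=g(x)g(y)$ is exactly $c'=xy$, since $c'=g^{-1}(ab)=xy$ is equivalent to $ab=g(xy)$ upon applying $g$.

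The crux --- and the step I expect to be the main obstacle --- is to extract $c'=xy$ from these two set equalities. Expanding into monomials $x^iy^j$, the left-hand side of the first identity is the union of $W := \{x^iy^j : 0\le i\le r-1,\ 0\le j\le s\}$ with the translated block $\{x^i c' y^j : 0\le i\le r-1,\ 0\le j\le s\}$, while the right-hand side is $W$ together with $\{x^iy^j : 1\le i\le r,\ 1\le j\le s+1\}$; the companion identity constrains a complementary boundary. The plan is to exploit that $z_0 := x^r = y^s$ is central in $\langle x,y\rangle_H$ (it commutes with both $x$ and $y$, hence with their products) together with the cancellativity of $H$ to control collisions among these monomials, and then to track the extremal entries --- those forced onto the boundary $i=r$ or $j=s+1$ --- in order to locate $c'$. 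I anticipate that, away from a single wraparound degeneracy, the only element the $c'$-block can contribute in common with the $xy$-block is $xy$ itself, giving $c'=xy$; and that this degeneracy, arising precisely when the boundary monomials fold back onto $W$ (for instance when $r=\ord_H(x)$ or $s+1>\ord_H(y)$), forces the relation $x^2y^2=1_H$. Here I would lean on Lemma~\ref{lem:smallest-integer} and the minimality built into Proposition~\ref{prop:x^r=y^s} to rule out every collision except the one responsible for the exceptional case.

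The bookkeeping is delicate because $x$ and $y$ need not commute and because the admissible index ranges begin to overlap once the orders are small, which is exactly where the dichotomy of the statement originates. It is worth noting that, structurally, this argument is the ``half-lemma'' (``$g(xy)=g(x)g(y)$ or $g(x)^2g(y)^2=1$'' for the domain of a pullback) applied to the isomorphism $f^{-1}$; routing the $K$-identities for $a,b$ through $f^{-1}$ is what makes the exceptional relation land in $H$ as $x^2y^2=1_H$ rather than in $K$.
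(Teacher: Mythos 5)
Your setup is sound and in fact parallels the paper's: the paper likewise invokes Proposition \ref{prop:x^r=y^s} to get $x^r=y^s$ with $r,s\ge 2$ (its Claims A and B), applies Lemma \ref{lem-equation-x^m=y^n}, and transports the identity across the isomorphism to conclude that the image of $xy$ lies in a product set of the form $\{1,a\}^r\{1,b\}^{s+1}$. But the proof stops exactly where the real work begins. Extracting ``$c'=xy$ or $x^2y^2=1_H$'' from the equality of the two expansions is the entire content of the lemma, and you leave it at ``I anticipate that\dots the only element the $c'$-block can contribute in common with the $xy$-block is $xy$ itself.'' That heuristic is not correct as stated: once $c'=x^py^q$ (which is all the set inclusion gives a priori), the translated blocks $\{x^ic'y^j\}$ and $\{x^i(xy)y^j\}$ overlap massively, and the set equality alone does not isolate $c'$. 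The paper needs three further non-trivial steps to close this gap: (a) a separate argument pinning the $a$-exponent of $g(xy)=a^ib^j$ to $i=1$, which works by multiplying by $\{1_K,b\}^{n}$ so that the order-$n$ unit $b$ absorbs all its own powers, pulling back under $f$, and then using the minimality/divisibility clause of Proposition \ref{prop:x^r=y^s}(ii) to exclude $x^i=y^k$; (b) symmetric reasoning to get $g(xy)=ab^j=a^\ell b$ with $2\le j<n$, $2\le\ell<m$; and (c) a delicate chase (Claims D and E of the paper) showing first $s\le n-2$, $u\le m-2$ and then $s=n-2$, $u=m-2$, whence $x^r=y^{-2}$, $y^v=x^{-2}$, and the divisibility clause forces $r=v=2$ and $x^2y^2=1_H$.

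Your diagnosis of where the exceptional case comes from is also off: it is not a ``wraparound'' occurring when $r=\ord_H(x)$ or $s+1>\ord_H(y)$ (the paper in fact proves $r\le m-2$ and $s\le n-2$, so those wraparounds never happen); it arises from the forced equalities $s=n-2$ and $u=m-2$ combined with the divisibility constraints, which conspire to give $r=v=2$. So while the scaffolding you build is the right one and essentially matches the paper's, the argument as written has a genuine gap at its crux, and the mechanism you propose for bridging it would not succeed without the additional ideas above.
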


\begin{proof}
Set $m := \ord_H(x)$ and $n := \ord_H(y)$, and let $a := g(x)$ and $b := g(y)$. We will often use without further comment that $g$ is an injective function and each of $H$ and $K$ is a cancellative monoid.

By the hypothesis that $x$ and $y$ are torsion elements, $m$ and $n$ are positive integers. Furthermore, we know from Prop\-o\-si\-tion \ref{prop:x^r=y^s} that $x, y \in H^\times$, and there exist $r, \allowbreak u \in \allowbreak \llb 1, m \rrb$ and $s, v \in  \llb 1, n \rrb$ such that 
\begin{equation}\label{equ-r|a,v|b}
\text{(i) }
x^r = y^s \text{ and } x^u = y^v;
\qquad
\text{(ii) if } 
x^c = y^d 
\text{ for some }
c, d \in \mathbb Z, \text{ then } r \mid c \text{ and }v \mid d.
\end{equation}
We henceforth assume that $g(xy) \ne g(x) g(y)$, that is, $g(xy) \ne ab$; otherwise, there is nothing to do.
To facilitate the presentation, the remainder of the proof is structured as a sequence of claims.

\medskip

\begin{claimx}
\label{lemma_4.5:claim_A}
$y \ne x^c$ and $x \ne y^d$ for all $c, d \in \mathbb Z$. 
\end{claimx}

\begin{proof}[Proof of \textsc{Claim} \ref{lemma_4.5:claim_A}]
Suppose, for the sake of contradiction, that $y = x^c$ for some $c \in \mathbb{Z}$ (the other case is similar).  
Using that $x^m = 1_H$ and applying the Division Al\-go\-rithm to write $c = \allowbreak nq + k$ with  
$q \in \allowbreak \mathbb{Z}$ and $k \in \allowbreak \llb 0, m - 1 \rrb$, we get that $
x^c = \allowbreak (x^m)^q x^k = \allowbreak x^k$. 
It is then clear by
Proposition~\ref{prop:g(x^k)=g(x)^k} that
\begin{equation*}
ab \ne g(xy) = g(x^{c+1}) = g(x^{k+1}) = a^{k+1} =  a \cdot a^k = a g(x^k) = a  g(x^c) = a g(y) = ab,
\end{equation*}
which is impossible and implies that $y \ne x^c$ for all $c \in \mathbb Z$. 
\renewcommand{\qedsymbol}{[\textit{Proof of \textsc{Claim} \ref{lemma_4.5:claim_A}}]\,$\square$} 
\end{proof}

\begin{claimx}
\label{lemma_4.5:claim_B}
$r \ge 2$ and $v \ge 2$.
\end{claimx}

\begin{proof}[Proof of \textsc{Claim} \ref{lemma_4.5:claim_B}]
Upon considering that $r$ and $v$ are positive integers, the conclusion is straightforward from Eq.~\eqref{equ-r|a,v|b}(i) and \textsc{Claim} \ref{lemma_4.5:claim_A}. \renewcommand{\qedsymbol}{[\textit{Proof of \textsc{Claim} \ref{lemma_4.5:claim_B}}\,]\,$\square$} 
\end{proof}

\begin{claimx}\label{lemma_4.5:claim_C}
    There exist $j, \ell \in \mathbb N$ with $2 \le j < n$ and $2 \le \ell < m$ such that $g(xy) =  ab^j = a^\ell b$.
\end{claimx}

\begin{proof}[Proof of \textsc{Claim} \ref{lemma_4.5:claim_C}]
We focus on proving that $g(xy) = ab^j$ for some $j \in \llb 2, n-1 \rrb$, as the second half of the statement can be established from Eq.~\eqref{equ-r|a,v|b} and Lemma \ref{lem-equation-x^m=y^n} by a symmetric argument. 

To start with, we gather from Eq.~\eqref{equ-r|a,v|b}(i) and the first part of Lemma~\ref{lem-equation-x^m=y^n} that 
\begin{equation}
\label{prop:special-form-of-g(xy):eq-1}
\{1_H, x\}^{r-1} \{1_H, xy\} \{1_H, y\}^s = \{1_H, x\}^r \{1_H, y\}^{s+1}.
\end{equation}
By applying $f$ to Eq.~\eqref{prop:special-form-of-g(xy):eq-1}, it follows that
\begin{equation}\label{equ-a^r-b^s(1)}
g(xy) \in \{1_K, a\}^{r-1} \{1_K, g(xy)\} \{1_K, b\}^s = \{1_K, a\}^r \{1_K, b\}^{s+1}.
\end{equation}
Therefore, $g(xy) = a^i b^j$ for some $i, j \in \mathbb N$ with $i \le r$ and $j \le s+1$. Let us check that $i = 1$.

Proposition \ref{prop:fix-ord} ensures that $\mathrm{ord}_K(b) = n$. Since $K$ is a cancellative monoid, $b$ is then a unit of order $n$ in $K$, and hence $b^z \{1_K, b\}^n = \{1_K, b\}^n$ for all $z \in \mathbb Z$. As a result, 
\begin{equation*}
\label{prop:special-form-of-g(xy):eq-2}
\{1_K, a^i\} \{1_K, b\}^n = \{1_K, b\}^n \cup a^i \{1_K, b\}^n =  \{1_K, b\}^n \cup a^i b^j \{1_K, b\}^n = \{1_K, a^i b^j\} \{1_K, b\}^n,
\end{equation*}
which, by the injectivity of $f$ (and omitting some routine steps), leads to
\begin{equation*}
x^i \in \{1_H, x^i\} \{1_H, y\}^n = \{1_H, xy\} \{1_H, y\}^n.
\end{equation*}
Thus, either $x^i = y^k$ or $x^i = x y^{k+1}$ for some $k \in \llb 0, n \rrb$.  
However, the first possibility must be ruled out. Indeed, we proved above that $i$ is a \textit{positive} integer no larger than $r$.  
So, if $x^i = y^k$, then by Eq.~\eqref{equ-r|a,v|b}(ii), we can only have $i = r$.  
This, however, entails by Proposition~\ref{prop:g(x^k)=g(x)^k} that
$$
g(xy) = a^r b^j = g(x^r)\, b^j = g(y^s)\, b^j = b^s b^j = b^{s+j} = g(y^{s+j}),
$$
which yields $xy = y^{s+j}$ and thus 
$x = y^{s + j - 1}$, contradicting \textsc{Claim}~\ref{lemma_4.5:claim_A}.

To sum up, we have found that $x^i = x y^{k+1}$, and hence 
$x^{i - 1} = y^{k + 1}$. By Eq.~\eqref{equ-r|a,v|b}(ii), this proves that $r \mid i - 1$. Since $r \ge 2$ (\textsc{Claim} \ref{lemma_4.5:claim_B}) and $-1 \le \allowbreak i - 1 < r$, it is then clear that $i = 1$ and, consequently, $g(xy) = ab^j$. 
It remains to see that, without loss of generality, $2 \le j < n$.  

For, we may of course assume that $1 \le \allowbreak j \le n$, as we have already noted that $b$ is a unit of order $n$ in $K$.  Moreover, the condition $ab^j = g(xy) \ne ab$ ensures that $j \ne 1$, and thus $2 \le j \le n$. Finally, suppose for a contradiction that $j = n$. Then 
$
g(xy) = \allowbreak ab^n = \allowbreak a = g(x)$, 
and hence $y = 1_H$.  
Since $g(1_H) = 1_K$, it follows that $ab \ne \allowbreak g(xy) = \allowbreak g(x) = \allowbreak a g(1_H) = ab$, which is absurd.
\renewcommand{\qedsymbol}{[\textit{Proof of \textsc{Claim} \ref{lemma_4.5:claim_C}}\,]\,$\square$} 
\end{proof}

\begin{claimx}\label{lemma_4.5:claim_D}
$s \le n-2$ and $u \le m-2$.
\end{claimx}

\begin{proof}[Proof of \textsc{Claim} \ref{lemma_4.5:claim_D}]
We verify that $s \le n-2$, as the other inequality can be treated analogously. 

We already know that $1 \le s \le n$. By \text{Claim} \ref{lemma_4.5:claim_C} (and the cancellativity of $K$), there exist $j, \ell \in \mathbb N$ with $2 \le j < \allowbreak n$ and $2 \le \ell < m$ such that $a^{\ell-1} = b^{j-1}$. By Proposition \ref{prop:g(x^k)=g(x)^k}, this guarantees that 
$$
g(x^{\ell - 1}) = a^{\ell-1} = b^{j-1} = g(y^{j-1}).
$$
Consequently, $x^{\ell - 1} = y^{j - 1}$, and so, by Eq.~\eqref{equ-r|a,v|b}(i), we have  
$r \mid \ell - 1$. It follows that $r \le \ell - 1 \le m - 2$, and hence  
$s \le n - 1$; in fact, if $s = n$, then $x^r = y^s = 1_H$, and thus $r = m$,  
a contradiction.

If, on the other hand, $s = n-1$, then by Eq.~\eqref{equ-r|a,v|b}(i) we have  
$x^r = y^{n - 1} = y^{-1}$, and hence $y = x^{-r}$, which is impossible by 
\textsc{Claim}~\ref{lemma_4.5:claim_A}. Therefore,  
$s \le n - 2$.
\renewcommand{\qedsymbol}{[\textit{Proof of \textsc{Claim} \ref{lemma_4.5:claim_D}}\,]\,$\square$} 
\end{proof}

\begin{claimx}\label{lemma_4.5:claim_E}
$s = n-2$ and $u = m-2$.
\end{claimx}

\begin{proof}[Proof of \textsc{Claim} \ref{lemma_4.5:claim_E}]
We will show that $s = n - 2$, as the equality $u = m-2$ can be proved similarly. 

To begin, we infer from Eq.~\eqref{equ-a^r-b^s(1)} and \textsc{Claim}~\ref{lemma_4.5:claim_C}  
that there exists $j \in \llb 2, n - 1 \rrb$ such that
\begin{equation}\label{equ-ab^j}
ab^j \{1_K, b\}^s \subseteq \{1_K, a\}^{r-1} \{1_K, ab^j\} \{1_K, b\}^s = \{1_K, a\}^r \{1_K, b\}^{s+1}.
\end{equation}
If $ab^p = a^k b^q$ for some $p, q \in \mathbb{Z}$ and $k \in \llb 0, r \rrb$, then  
$a^{k - 1} = b^{p - q}$, which, by Eq.~\eqref{equ-r|a,v|b}(ii), is only possible if $r \mid k - 1$. So, $k = 1$ (as we have $r \ge 2$ by \textsc{Claim}~\ref{lemma_4.5:claim_B}),  
and we conclude from Eq.~\eqref{equ-ab^j} that  
\[
ab^j \{1_K, b\}^s \subseteq a\{1_K, b\}^{s + 1}.
\]
By cancelling $a$ on the left, the last display gives in turn that  
\begin{equation}
\label{equ:something}
b^{j+s} \in b^j \{1_K, b\}^s \subseteq \{1_K, b\}^{s + 1}.
\end{equation}
Now, \textsc{Claim}~\ref{lemma_4.5:claim_C} yields $2 \le j \le n - 1$,  
and thus $s + 1 < j + s$. Consequently, for Eq.~\eqref{equ:something} to hold, it is  
necessary that $j + s \ge n$ (recall that $b$ is a unit of order $n$ in $K$). It follows that  
$j + k = n$ for some $k \in \llb 1, s \rrb$, which, again by Eq.~\eqref{equ:something},  
implies that $
b^{n - 1} = b^{j + k - 1} = b^t$
for a certain $t \in \llb 0, \allowbreak s + \allowbreak 1 \rrb$. However, the latter is only possible if $s + 1 \ge n - 1$, and so,  
by \textsc{Claim}~\ref{lemma_4.5:claim_D}, $s = n - 2$.
\renewcommand{\qedsymbol}{[\textit{Proof of \textsc{Claim} \ref{lemma_4.5:claim_E}}\,]\,$\square$}
\end{proof}

We are ready to finish the proof of Lemma \ref{lem:g(xy)=g(x)g(y)orx^2y^2=1}. In fact, since $y$ is a unit of order $n$ in $H$, we get from \textsc{Claim}~\ref{lemma_4.5:claim_E} that $x^r = y^{n-2} = y^{-2}$. Likewise, 
$y^v = x^{m-2} = x^{-2}$. Therefore, $
x^2 = \allowbreak y^{-v}$ and $y^2 = x^{-r}$. By \textsc{Claim}~\ref{lemma_4.5:claim_B} and  Eq.~\eqref{equ-r|a,v|b}(ii), this shows that $r = v = 2$. As a result, $x^2 = y^{-2}$, that is, 
$x^2 y^2 = 1_H$.
\end{proof}

\begin{lemma}\label{lem-x^2=1}
Let $g$ be the pullback of an isomorphism $f$ from $\mathcal{P}_{\fin,1}(H)$ to $\mathcal{P}_{\fin,1}(K)$, where $H$ and $K$ are cancellative monoids. If $x, y \in H$ are torsion elements with $x^2 = 1_H$ or $y^2 = 1_H$, then $g(xy) = g(x)g(y)$.
\end{lemma}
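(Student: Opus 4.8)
The plan is to use the preceding Lemma~\ref{lem:g(xy)=g(x)g(y)orx^2y^2=1} to reduce to the symmetric situation in which \emph{both} $x$ and $y$ square to the identity, and then to pin down $g(xy)$ by a single application of Lemma~\ref{lem-equation-x^m=y^n}. Indeed, if $g(xy) = g(x)g(y)$ there is nothing to prove; otherwise Lemma~\ref{lem:g(xy)=g(x)g(y)orx^2y^2=1} forces $x^2 y^2 = 1_H$, and combining this with the hypothesis that $x^2 = 1_H$ or $y^2 = 1_H$ yields $x^2 = y^2 = 1_H$. Thus it suffices to establish $g(xy) = g(x)g(y)$ under the additional assumption that $x^2 = y^2 = 1_H$.

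So suppose $x^2 = y^2 = 1_H$, and write $a := g(x)$ and $b := g(y)$. I would first dispose of the degenerate configurations. If $x = 1_H$ or $y = 1_H$, then $a = 1_K$ or $b = 1_K$ and the claim is immediate from $g(1_H) = 1_K$; and if $xy = 1_H$, then $y = x^{-1} = x$ (as $x^2 = 1_H$), so that $g(xy) = 1_K = a^2 = ab$, using that $a$ has order $\ord_H(x) \le 2$ by Proposition~\ref{prop:fix-ord}. Hence one may assume that $x$, $y$, and $xy$ are all distinct from $1_H$, in which case $x$ and $y$ have order exactly $2$ and, again by Proposition~\ref{prop:fix-ord}, so do $a$ and $b$ in $K$.

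The core step exploits $x^2 = y^2$ ($= 1_H$), i.e.\ the hypothesis of Lemma~\ref{lem-equation-x^m=y^n} with $r = s = 2$, to obtain
\[
\{1_H, x\}\,\{1_H, xy\}\,\{1_H, y\}^2 = \{1_H, x\}^2\,\{1_H, y\}^3.
\]
Since $x^2 = y^2 = 1_H$ makes $\{1_H, x\}$ and $\{1_H, y\}$ idempotent in $\mathcal P_{\fin,1}(H)$, this collapses to
\[
\{1_H, x\}\,\{1_H, xy\}\,\{1_H, y\} = \{1_H, x\}\,\{1_H, y\}.
\]
Applying the homomorphism $f$ and Definition~\ref{def:pullback} turns this into $\{1_K, a\}\{1_K, g(xy)\}\{1_K, b\} = \{1_K, a\}\{1_K, b\} = \{1_K, a, b, ab\}$, and reading off the element $1_K \cdot g(xy) \cdot 1_K$ on the left-hand side gives the membership $g(xy) \in \{1_K, a, b, ab\}$.

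It then remains to eliminate the first three candidates. By injectivity of $g$ and $g(1_H) = 1_K$, the assumption $xy \neq 1_H$ rules out $g(xy) = 1_K$; meanwhile $g(xy) = a = g(x)$ would give $xy = x$ and hence, cancelling $x$, $y = 1_H$, while $g(xy) = b = g(y)$ would give $xy = y$ and hence $x = 1_H$ --- both excluded. Therefore $g(xy) = ab = g(x)g(y)$, as desired. I expect no serious obstacle once the reduction to $x^2 = y^2 = 1_H$ is in place; the only point requiring genuine care is the bookkeeping of the degenerate cases $x = 1_H$, $y = 1_H$, and $xy = 1_H$, which must be cleared away before the final injectivity-and-cancellation argument can be applied.
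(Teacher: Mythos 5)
Your argument is correct and follows essentially the same route as the paper's: reduce to $x^2=y^2=1_H$ via Lemma~\ref{lem:g(xy)=g(x)g(y)orx^2y^2=1}, establish $\{1_H,x\}\{1_H,xy\}\{1_H,y\}=\{1_H,x\}\{1_H,y\}$, apply $f$ to get $g(xy)\in\{1_K,a,b,ab\}$, and eliminate the three unwanted candidates by injectivity and cancellativity. The only cosmetic differences are that you obtain the key set identity from Lemma~\ref{lem-equation-x^m=y^n} plus idempotency rather than by direct expansion, and you clear the degenerate cases up front instead of reaching them as terminal contradictions.
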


\begin{proof}
Assume to the contrary that $g(xy) \ne ab$, where $a := g(x)$ and $b := g(y)$. By Lemma~\ref{lem:g(xy)=g(x)g(y)orx^2y^2=1}, we have $x^2 y^2 = 1_H$; and by the hypothesis that $x^2 = 1_H$ or $y^2 = 1_H$, this entails $x^2 = y^2 = 1_H$.  
Consequently,
\begin{equation*}
\{1_H, x\} \{1_H, xy\} \{1_H, y\} 
= \{1_H, x, y, xy, x^2 y, xy^2, x^2 y^2\} 
= \{1_H, x, y, xy\} 
= \{1_H, x\} \{1_H, y\}.
\end{equation*}
By applying $f$ to the last display and using $f(\{1_H, z\}) = \{1_K, g(z)\}$ 
for each $z \in H$, it follows that
\begin{equation}\label{equ:last-display}
ab \ne g(xy) 
\in \{1_K, a\} \{1_K, g(xy)\} \{1_K, b\} 
= \{1_K, a\} \{1_K, b\} 
= \{1_K, a, b, ab\}.
\end{equation}
Now, we gather from
Eq.~\eqref{equ:last-display} and the injectivity of $g$ that $xy \in \{1_H, x, y\}$, which, by the cancellativity of $H$,  
yields either $xy = 1_H$, or $x = 1_H$, or $y = 1_H$. However, if $x = 1_H$ or $y = 1_H$, then $g(xy) = ab$, which is absurd.
So, the only possibility is that $xy = 1_H$, and since $x^2 = 1_H$ (as noted above), we find that
$y = \allowbreak x^2 y = x(xy) = x1_H = x$.
Accordingly, we are guaranteed by Proposition~\ref{prop:g(x^k)=g(x)^k} that
$$
g(xy) = g(x^2) = g(x)^2 = g(x) g(y) = ab,
$$
which is again a contradiction and completes the proof.
\end{proof}

\begin{proposition}\label{pro-main}
Let $g$ be the pullback of an isomorphism $f$ from $\mathcal{P}_{\fin,1}(H)$ to $\mathcal{P}_{\fin,1}(K)$, where $H$ and $K$ are cancellative monoids. If $x, y \in H$ are torsion elements, then $g(xy) = g(x)g(y)$. 
\end{proposition}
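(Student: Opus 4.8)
The plan is to argue by contradiction, reducing to the single configuration left open by the two preceding lemmas and then destroying it with a ``pullback of the inverse'' trick. Write $a := g(x)$ and $b := g(y)$ and suppose $g(xy) \neq ab$. Lemma~\ref{lem:g(xy)=g(x)g(y)orx^2y^2=1} then forces $x^2 y^2 = 1_H$, while the contrapositive of Lemma~\ref{lem-x^2=1} forces $x^2 \neq 1_H$ and $y^2 \neq 1_H${\kern0.1em}; in particular $x$ and $y$ are units of order $\geq 3$ with $x^2 = y^{-2}$, and a one-line computation with Proposition~\ref{prop:g(x^k)=g(x)^k} (exactly as in \textsc{Claim}~\ref{lemma_4.5:claim_A}) shows $y \neq x^c$ and $x \neq y^d$ for all integers $c, d$. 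Using Propositions~\ref{prop:fix-ord} and~\ref{prop:g(x^k)=g(x)^k} I transfer the relation to $K$: indeed $a^2 = g(x^2) = g(y^{-2}) = b^{-2}$, with $\ord_K(a) = \ord_H(x) =: m$, $\ord_K(b) = \ord_H(y) =: n$, and $b \neq a^c$ for all $c$.

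The first main step is to pin down the shape of $g(xy)$. Reasoning exactly as in the proof of Lemma~\ref{lem:g(xy)=g(x)g(y)orx^2y^2=1} --- that is, pushing the identity of Lemma~\ref{lem-equation-x^m=y^n} (applied with $x^2 = y^{n-2}$) through $f$ and invoking the divisibility clause of Proposition~\ref{prop:x^r=y^s} --- I would re-establish that there are integers $j, \ell$ with $2 \leq j \leq n-1$ and $2 \leq \ell \leq m-1$ such that $g(xy) = ab^j = a^\ell b${\kern0.1em}; the derivation uses only cancellativity, so it applies verbatim here. Cancelling in $ab^j = a^\ell b$ also records the auxiliary relation $a^{\ell-1} = b^{j-1}$.

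The engine of the contradiction is the observation (made in the proof of Proposition~\ref{prop:g(x^k)=g(x)^k}) that the pullback of $f^{-1}$ is $h := g^{-1}$, with $h(a) = x$ and $h(b) = y$. I now apply Lemma~\ref{lem:g(xy)=g(x)g(y)orx^2y^2=1} to $f^{-1}$ and the torsion pair $(a, b^j)$ in $K$: either $h(ab^j) = h(a)\,h(b^j) = x\,y^j$ (here $h(b^j) = y^j$ by Proposition~\ref{prop:g(x^k)=g(x)^k}), or $a^2(b^j)^2 = 1_K$. In the first case $xy = g^{-1}(g(xy)) = h(ab^j) = xy^j$, whence $y^{j-1} = 1_H$, which is impossible since $1 \le j-1 \le n-2 < n = \ord_H(y)$. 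Hence the second case holds, and because $a^2 = b^{-2}$ it collapses to $b^{2(j-1)} = 1_K$, i.e.\ $n \mid 2(j-1)${\kern0.1em}; as $2 \leq j \leq n-1$, this is possible only if $n$ is even and $j = \tfrac{n}{2}+1$. A symmetric application to the pair $(a^\ell, b)$ forces, in the same way, $m$ even and $\ell = \tfrac{m}{2}+1$.

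The last step is to eliminate this residual numerical case, and this is where I expect the genuine difficulty to lie. Because the relation $x^2 = y^{-2}$ can be realized in a non-abelian, even infinite, group (for instance by $i$ and $j$ in the quaternion group, or inside a central extension of the infinite dihedral group), the product $xy$ need not be torsion{\kern0.1em}; consequently Lemmas~\ref{lem:g(xy)=g(x)g(y)orx^2y^2=1}--\ref{lem-x^2=1} are unavailable for $xy$, and every remaining deduction must be carried out at the level of product sets in $K$. To finish, I would play the two representations $g(xy) = ab^j$ and $g(xy) = a^\ell b$ against each other, using $a^2 = b^{-2}$ to rewrite powers of $b$ as powers of $a$ and invoking the divisibility clause of Proposition~\ref{prop:x^r=y^s} once more, until the surviving constraints on $j$, $\ell$, $m$, $n$ become mutually inconsistent. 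This concluding arithmetic bookkeeping, rather than any single conceptual move, is the part I expect to be the most delicate.
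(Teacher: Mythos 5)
Your reduction is correct as far as it goes: re-running Claims A--C from the proof of Lemma~\ref{lem:g(xy)=g(x)g(y)orx^2y^2=1} under the standing assumption $g(xy)\ne ab$ is legitimate, and applying that lemma to $f^{-1}$ and the torsion pairs $(a,b^j)$ and $(a^\ell,b)$ does yield $n\mid 2(j-1)$ and $m\mid 2(\ell-1)$, hence $j=\tfrac n2+1$ and $\ell=\tfrac m2+1$. But the argument stops there, and the gap is not mere unfinished bookkeeping: the residual configuration cannot be eliminated by arithmetic on $j,\ell,m,n$, because every element-level relation you have extracted is simultaneously realizable. Take the quaternion group with $x=a=\mathbf{i}$, $y=b=\mathbf{j}$: then $m=n=4$, $x^2y^2=1$, $x^2\ne 1\ne y^2$, $\mathbf{j}\notin\langle\mathbf{i}\rangle$, $r=v=2$, $a^2=b^{-2}$, $a^{m/2}=b^{n/2}$, and with $j=\ell=3$ one has $ab^3=a^3b=-\mathbf{k}\ne ab$. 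So the constraints on the exponents are mutually \emph{consistent}, and no further appeal to the divisibility clause of Proposition~\ref{prop:x^r=y^s} can produce a contradiction. To finish one must return to set-level identities in $\mathcal P_{\fin,1}$, i.e.\ use more of the hypothesis that $f$ is an isomorphism than the element relations you have distilled from it.

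That is what the paper's proof does. From $x^4y^2=x^2\ne 1_H$ and Lemma~\ref{lem:g(xy)=g(x)g(y)orx^2y^2=1} applied to the pair $(x^2,y)$ it gets $g(x^2y)=g(x)^2g(y)\ne g(x)g(xy)$; a second application, to the pair $(x,xy)$, gives $x^2(xy)^2=1_H$, which combined with $x^2y^2=1_H$ yields $xyx=y$; this produces the set identity $\{1_H,xy\}\{1_H,x\}=\{1_H,x\}\{1_H,y\}$, and pushing it through $f$ forces $g(xy)\in\{1_K,g(x),g(y),g(x)g(y)\}$, whence $xy=1_H$ and finally $g(xy)=g(x^n)=g(x)^n=g(x)g(y)$, a contradiction. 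Your observation that $xy$ need not be torsion is pertinent --- it is exactly the hypothesis required for the paper's second application of Lemma~\ref{lem:g(xy)=g(x)g(y)orx^2y^2=1} to the pair $(x,xy)$, and it holds automatically only in the setting (torsion groups) where Proposition~\ref{pro-main} is ultimately invoked --- but flagging that subtlety does not substitute for an argument eliminating the case $j=\tfrac n2+1$, $\ell=\tfrac m2+1$. As written, the proposal is incomplete.
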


\begin{proof}
Suppose for a contradiction that $g(xy) \ne g(x)g(y)$. Then, by Lemmas \ref{lem:g(xy)=g(x)g(y)orx^2y^2=1} and \ref{lem-x^2=1},
\begin{equation}\label{prop_4.7:eq_01}
\text{(i) } x^2y^2 = 1_H
\quad\text{and}\quad
\text{(ii) } x^2 \ne 1_H \ne y^2.
\end{equation}
It follows that $x^4 y^2 = x^2 (x^2 y^2) = x^2 \ne 1_H$. Thus, we infer from Lemma \ref{lem:g(xy)=g(x)g(y)orx^2y^2=1} and Proposition \ref{prop:g(x^k)=g(x)^k} that
\begin{equation}\label{prop_4.7:eq_02}
g(x^2y) = g(x^2)g(y) = g(x)^2 g(y) \ne g(x) g(xy),
\end{equation}
where the inequality at the far right stems from the cancellativity of $K$ and the assumption that $g(xy) \ne g(x) g(y)$.
Then, another call to Lemma \ref{lem:g(xy)=g(x)g(y)orx^2y^2=1}, together with Eqs.~\eqref{prop_4.7:eq_01}(i) and \eqref{prop_4.7:eq_02}, yields
$$
x^2 y^2 = 1_H = x^2(xy)^2 = x^3yxy,
$$
and hence $xyx = y$ (by the cancellativity of $H$). Accordingly, we find that
\[
\{1_H, xy\}\{1_H, x\} = \{1_H, xy, x, xyx\} = \{1_H, x, y, xy\} = \{1_H,x\} \{1_H,y\},
\]
which, by taking the image of the left-most and right-most sides under $f$, leads to
\begin{equation}\label{prop_4.7:eq_03}
g(x) g(y) \ne g(xy) \in \{1_K, g(xy)\} \{1_K, g(x)\} = \{1_K, g(x)\}\{1_K, g(y)\} = \{1_K, g(x), g(y), g(x) g(y)\}.
\end{equation}
Now, $n := \ord_H(x)$ is a positive integer, since $x$ is a torsion element in $H$. Similarly as in the last part of the proof of Lemma \ref{lem-x^2=1}, it is then straightforward from Eqs.~\eqref{prop_4.7:eq_01}(ii) and \eqref{prop_4.7:eq_03} that $xy = 1_H$, and hence $y = x^{-1} = x^{n-1}$. As a result, we conclude from Proposition~\ref{prop:g(x^k)=g(x)^k} that
$$
g(xy) = g(x^n) = g(x)^n = g(x) g(x)^{n-1} = g(x) g(x^{n-1}) = g(x) g(y),
$$ 
which is however a contradiction and completes the proof.
\end{proof}

\section{Main result}

We are now in a position to prove the main result of the paper (Theorem \ref{thm:BG-for-torsion-groups}) and thereby  
show that Question~\ref{ques:BG-like-for-monoids} has a positive answer for the class of torsion groups (Corollary~\ref{cor:4.9}).

\begin{theorem}\label{thm:BG-for-torsion-groups}
Let $H$ and $K$ be cancellative monoids, and suppose that at least one of them is torsion. Assume, in addition, that there exists an isomorphism $f$ from $\mathcal P_{\fin,1}(H)$ to $\mathcal P_{\fin,1}(K)$. Then $H$ and $K$ are both groups, and the pullback of $f$ is an isomorphism from $H$ to $K$.
\end{theorem}

\begin{proof}
Let $g$ be the pullback of the isomorphism $f$. 
By Proposition \ref{prop:fix-ord} and the bijectivity of $g$ (Corollary \ref{cor:iso-induces-a-bijection}), $H$ is torsion if and only if $K$ is. On the other hand, any cancellative torsion monoid is a group. Therefore, $H$ and $K$ are both torsion groups, and it follows from 
Proposition~\ref{pro-main} that $g(xy) = g(x)g(y)$ for all $x, y \in H$. That is, $g$ is an isomorphism from $H$ to $K$. 
\end{proof}

The following corollary is now an obvious consequence of Theorem~\ref{thm:BG-for-torsion-groups}.

\begin{corollary}\label{cor:4.9}
If $H$ and $K$ are groups, at least one of which is torsion, and $\mathcal P_{\fin,1}(H)$ is isomorphic to $\mathcal P_{\fin,1}(K)$, then $H$ is isomorphic to $K$.
\end{corollary}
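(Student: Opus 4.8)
The plan is to derive the statement directly from Theorem~\ref{thm:BG-for-torsion-groups}, which has already done all the heavy lifting, so that the corollary is a one-line specialization. First I would record the trivial observation that every group is in particular a cancellative monoid: the hypothesis that $H$ and $K$ are groups, one of which is torsion, therefore places us squarely in the setting of Theorem~\ref{thm:BG-for-torsion-groups}, namely two cancellative monoids one of which is torsion. (As an aside, the theorem already tells us that both must in fact be groups, which is of course automatic here.)

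Next, I would unpack the assumption that $\mathcal P_{\fin,1}(H)$ is isomorphic to $\mathcal P_{\fin,1}(K)$: this furnishes an explicit isomorphism $f \colon \mathcal P_{\fin,1}(H) \to \mathcal P_{\fin,1}(K)$. By Corollary~\ref{cor:iso-induces-a-bijection} together with Definition~\ref{def:pullback}, such an $f$ has a well-defined pullback $g \colon H \to K$, that is, the unique bijection satisfying $f(\{1_H, x\}) = \{1_K, g(x)\}$ for every $x \in H$.

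Finally, I would invoke Theorem~\ref{thm:BG-for-torsion-groups} verbatim: under the present hypotheses it guarantees that this pullback $g$ is an isomorphism from $H$ to $K$. Hence $H$ and $K$ are isomorphic as groups, which is exactly the sought conclusion. I do not expect any genuine obstacle at this stage; the entire substance of the result lives upstream, in the chain of lemmas of Sect.~\ref{sect:04} culminating in Proposition~\ref{pro-main} and then in Theorem~\ref{thm:BG-for-torsion-groups}. The corollary merely extracts the case of groups, where the cancellativity hypothesis of the theorem is satisfied for free and the ``both are groups'' conclusion is already part of the data.
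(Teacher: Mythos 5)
Your proposal is correct and matches the paper exactly: the paper states that the corollary is an obvious consequence of Theorem~\ref{thm:BG-for-torsion-groups}, and your argument --- groups are cancellative monoids, so the theorem applies and the pullback of the given isomorphism is an isomorphism $H \to K$ --- is precisely that specialization.
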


It remains open whether 
Corollary~\ref{cor:4.9} extends to arbitrary groups, while it is known from \cite{Rago2025} that  
Theorem~\ref{thm:BG-for-torsion-groups} does not carry over to arbitrary cancellative monoids, even in the commutative setting.

\section*{Acknowledgments}

The authors were supported by Grant No.~A2023205045 from the Natural Science Foundation  
of Hebei Province. Weihao Yan was also supported by Austrian FWF Project No.~PAT9756623  
during his visit to the University of Graz in July--August 2025.

The authors are grateful to the anonymous referees of a previous version of the manuscript for their careful proofreading and many
helpful comments that have greatly improved the presentation.

\end{document}